\newtheorem*{thm*}{Theorem}
\newtheorem{thm}{Theorem}[section]
\newtheorem{cor}[thm]{Corollary}
\newtheorem{lemma}[thm]{Lemma}
\newtheorem{prop}[thm]{Proposition}
\newtheorem{rem}[thm]{Remark}
\theoremstyle{definition}
\newtheorem{defn}[thm]{Definition}
\newcommand{\R}{\mathbb{R}}
\newcommand{\N}{\mathbb{N}}
\newcommand{\Z}{\mathbb{Z}}
\newcommand{\C}{\mathbb{C}}
\newcommand{\Id}{\mathrm{Id}}
\newcommand{\ud}{\mathrm{d}}
\title{Invariant sets for the wind-tree model}
\author{Yuriy Tumarkin}
\address{Institut f\"ur Mathematik, Universit\"at Z\"urich, Winterthurerstrasse 190, CH-8057 Z\"urich, Switzerland}
\email{yuriy.tumarkin@math.uzh.ch}
\begin{document}
\maketitle
\begin{abstract}
We consider the wind-tree model, a $\Z^2$-periodic billiard. In the case when the underlying compact translation surface lies on a periodic orbit of the Teichmüller geodesic flow, and at least one of the two homology classes defining the cover is unstable for the Kontsevich-Zorich cocycle, we prove that every orbit closure of the billiard has Hausdorff dimension strictly smaller than 2. The proof relies on a construction of explicit invariant functions, which along the way gives a new proof of non-ergodicity and non-transitivity of the wind-tree model for all parameters and almost all directions, as first shown by Fr\c{a}czek and Ulcigrai in \cite{FU}.

\end{abstract}

\section{Introduction}

The Ehrenfest wind-tree model is a billiard in the plane with rectangular obstacles. First introduced by P. Ehrenfest and T. Ehrenfest in 1911 \cite{Ehrenfest} to model gas diffusion, the periodic version as first studied by Hardy and Weber \cite{HardyWeber} has attracted a lot of attention in the last decade and a half. It is defined as follows: a point (the `wind') moves along straight lines in the plane $\R^2$ and bounces elastically off rectangular obstacles (the `trees'), which are translates of the rectangle $[0,a] \times [0,b]$ for some parameters $0<a,b<1$, centered at each element of $\Z^2$.

Most of the recent progress on the wind-tree model (\cite{HLT,CG,Delecroix,DHL,FU,FH,DZ,AH,CHLP}) has been due to the following observation. Following the classical unfolding construction of Katok and Zemlyakov \cite{KatokZemlyakov}, the billiard flow on the wind-tree model is equivalent to the straight-line flow on an infinite translation surface $X_\infty$ (see \Cref{sec:background} for definitions), which turns out to be a $\Z^2$-cover of a genus 5 compact translation surface $X$, which in turn is a fourfold cover of a genus 2 translation surface $Y$. This reduction to compact translation surfaces allows one to apply deep results from Teichmüller dynamics (such as the formula of Eskin, Kontsevich and Zorich \cite{EKZ} for the Lyapunov exponents of the Kontsevich-Zorich cocycle and the work of Chaika and Eskin \cite{ChaikaEskin} on Oseledets genericity).

Among the most celebrated results on the wind-tree model is the result of Fr\c{a}czek and Ulcigrai \cite{FU}, showing that for any parameters $a,b$, the billiard flow in almost any direction $\theta$ is non-ergodic, and further even non-transitive, meaning that it has no dense orbits. This is in stark contrast to well-known results about compact translation surfaces, where by the work of Kerckhoff, Masur and Smillie \cite{KMS}, for any compact translation surface the straight-line flow is ergodic (and even uniquely ergodic) in almost every direction.

The work of Fr\c{a}czek and Ulcigrai opens many natural questions which need to be answered to obtain a full understanding of the dynamics of the wind-tree model: 
\begin{itemize}
\item We know that Lebesgue measure is not ergodic for the flow, so what are its ergodic components? 
\item Further, we know that generically no orbits are dense, so what are their closures? 
\end{itemize}
The goal of this article is to take a first step towards answering these questions, with a hope of spurring further progress in this direction.

Denote by $X(a,b)$ the compact translation surface underlying the wind-tree model with parameters $(a,b)$, and by $r_{\pi/2-\theta}$ the rotation by $\pi/2-\theta$ (see \Cref{sec:background} for precise definitions). We discuss the case when $r_{\pi/2-\theta} X(a,b)$ is periodic for the Teichmüller geodesic flow. The cover $r_{\pi/2-\theta} X_\infty(a,b)$ is defined by two homology classes $\gamma_h, \gamma_v \in H_1(X,\Z)$ (see \Cref{sec:covers}).

The main new result that we obtain is as follows:

\begin{thm}\label{thm:main}
Let $a,b \in (0,1)$ be any parameters, let $\theta$ be a direction such that $r_{\pi/2-\theta} X(a,b)$ is periodic for the Teichmüller geodesic flow and at least one of $\gamma_h$ and $\gamma_v$ is unstable. Then for the billiard flow on the wind-tree model with parameters $a,b$ in direction $\theta$, the closure of every orbit has Hausdorff dimension strictly less than 2.
\end{thm}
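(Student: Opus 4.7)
The plan is to unfold the billiard to the straight-line flow $g_t$ on the infinite cover $X_\infty$, to produce an explicit $g_t$-invariant function $f \colon X_\infty \to \R$ out of the pseudo-Anosov renormalization combined with an unstable direction of the Kontsevich--Zorich cocycle, and then to bound the Hausdorff dimension of each level set of $f$ by something strictly less than $2$.

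Under the Katok--Zemlyakov unfolding, the billiard in direction $\theta$ is conjugate to $g_t$ on the $\Z^2$-cover $X_\infty \to X$ with deck transformations $T_h, T_v$ corresponding to $\gamma_h, \gamma_v$. Periodicity of $r_{\pi/2-\theta}X$ supplies an affine pseudo-Anosov $\phi \colon X \to X$ fixing the direction with dilation $\lambda > 1$, whose lift $\tilde\phi$ to $X_\infty$ satisfies
$$\tilde\phi \circ g_t = g_{\lambda t} \circ \tilde\phi, \qquad \tilde\phi \circ T_\gamma \circ \tilde\phi^{-1} = T_{A\gamma},$$
where $A \in GL_2(\Z)$ is the matrix of the Kontsevich--Zorich cocycle on $\mathrm{span}(\gamma_h, \gamma_v)$. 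The instability hypothesis gives an eigenvalue $\mu$ of $A$ with $|\mu|>1$; fix a left eigenvector $v$. Letting $D \colon X_\infty \times \R \to \Z^2$ be the deck-displacement cocycle and $F(p,t) := \langle v, D(p,t)\rangle$, the renormalization forces $F(\tilde\phi p, \lambda t) = \mu\, F(p,t)$, so $F$ is self-similar of exponent $\alpha := \log|\mu|/\log\lambda \in (0,1)$.

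I would then construct the invariant function either as
$$f(p) := \limsup_{t \to \infty} t^{-\alpha}\, F(p,t),$$
or, more robustly, from a $\phi$-adapted Markov coding for the first return of $g_t$, with $f$ built as a geometric series whose terms are contracted by $\mu^{-1}$ in the fiber and by $\lambda^{-1}$ in time. The cocycle identity together with $\alpha < 1$ forces $f \circ g_s = f$, while $|\mu| > 1$ forces $f$ to be non-constant; this already recovers the Fr\c{a}czek--Ulcigrai non-ergodicity and non-transitivity. Since $f$ is flow-invariant (and, with care, lower semicontinuous on $X_\infty$), every orbit closure lies in some level set of $f$. The level sets inherit a self-affine structure from $\tilde\phi$: it contracts them along the stable direction of $\phi$ by $\lambda^{-1}$ and along the $v$-fiber by $\mu^{-1}$. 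A Moran-type covering count on the Markov partition then bounds the Hausdorff dimension of each level set above by $2-\delta$, where $\delta > 0$ is an explicit function of $\log|\mu|/\log\lambda$.

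The main obstacle is this last step. Constructing $f$ and extracting non-ergodicity is the easier half; passing from non-constancy of $f$ to a dimension bound on \emph{every} level set (not merely the typical one) requires geometric control of $f$ at all scales that is uniform in the level. I would supply this by combining the $\phi$-adapted Markov partition with the linearity of $\tilde\phi$ on symbolic cylinders, reducing the estimate to a deterministic self-affine Moran count in which the unstable exponent $\log|\mu|$ supplies the strict dimension gap.
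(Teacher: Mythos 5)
There is a genuine gap in the first and central step: the proposed invariant function does not work. Your $F(p,t) = \langle v, D(p,t)\rangle$, being the displacement accumulated along the orbit from time $0$ to $t$, depends only on the base point $\pi(p) \in X$, not on the deck coordinate of $p$. Consequently $f(p) := \limsup_{t\to\infty} t^{-\alpha} F(p,t)$ is a flow-invariant measurable function of the \emph{base} point, and since the flow on $X$ is ergodic it must be constant almost everywhere — giving no information on the cover. Worse, you are projecting onto the \emph{unstable} eigenvector $v$, whereas the entire mechanism of the paper goes through the \emph{stable} complement. Concretely, one writes the symplectic block $K = E^{-+}$ (say) as the span of the unstable class $\gamma_h$ together with another class $\sigma_1$, then takes a \emph{stable} vector $\alpha_1 = b\gamma_h + c\sigma_1 \in K \cap E^-_\omega$. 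Because $\alpha_1$ is stable, the induced piecewise-constant cocycle $\Phi(\alpha_1)$ over the Poincar\'e IET is a coboundary $h\circ T - h$, and — this is the step you have no substitute for — by the Marmi–Moussa–Yoccoz theorem for Roth-type IETs the transfer function $h$ is \emph{continuous}. The function on $I \times \Z^m$ is then $\tilde h(x,a) = h(x) - ba$, whose $a$-dependence is exactly what cancels the coboundary part of $\Phi(\alpha_1)$ under the skew product, leaving invariance modulo the lattice $c\,\Z$. This is structurally different from a scaled limsup of deck displacements, and it is what produces a continuous non-constant invariant function whose level sets are closed and therefore contain orbit closures.

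The "Moran-type covering count" is also asserted rather than supplied, and the actual argument in the periodic case is not automatic: one needs the Vershik adic coding to express $h$ as a geometric series $h(x) = \sum_i \lambda^{i-1} f(e_i)$ (with $\lambda < 1$ the \emph{stable} eigenvalue), a lemma (\Cref{lem:different_sums}) guaranteeing that for every symbol one can find two edges producing distinct partial sums, and a gap lemma controlling the tail uniformly. Only then does one get a Cantor set $C \supset h^{-1}(z)$ with dimension $< 1$, uniformly in $z$, after which the passage from the $\Z$-cover to the $\Z^2$-cover and from $I$ to the full surface is routine. Your sketch has the right geometric intuition about self-similarity under the pseudo-Anosov, but both the construction of the invariant function and the level-set dimension bound need the stable-coboundary / MMY mechanism to be made to work.
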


\begin{rem}
By \emph{unstable} we mean that $\gamma_h$ or $\gamma_v$ belongs to the the unstable space $E^+_\omega$. (See \Cref{sec:background}). 

Since $\gamma_h$ and $\gamma_v$ are integer vectors, they can never be stable, but they can in some cases be central. \\

Let $\Psi$ be the pseudo-Anosov diffeomorphism of $X(a,b)$ corresponding to the closed Teichmüller geodesic containing $r_{\pi/2-\theta} X(a,b)$. If both $\gamma_h$ and $\gamma_v$ are invariant under the action of $\Psi$, this corresponds to the case when $\Psi$ lifts to the cover $X_\infty(a,b)$. This case was studied in \cite{Tum}, where it was shown that in this case the flow is ergodic, and further one has a full classification of Radon ergodic invariant measures as the Maharam measures.

As far as we are aware, other central cases, when $\Psi$ does not lift, are still open.
\end{rem}

Fraçzek and Hubert extended the results of \cite{FU} to more general translation surfaces and higher dimensional covers in \cite{FH}, and in fact our result holds in this more general setting.

The setup and large part of the technical input for our proof is along the same lines as that of \cite{FU, FH}. We also use the observation that the homology of $X$ splits into several invariant blocks for the Kontsevich-Zorich cocycle, and that the homology classes defining the cover $X_\infty$ are complemented by homology classes which are stable for the cocycle. These stable classes determine cocycles over the first-return map of the flow which are coboundaries. From here, while \cite{FU} and \cite{FH} use the theory of essential values to conclude non-ergodicity, we rely on a result of Marmi, Moussa and Yoccoz \cite{MMY1} which states that these coboundaries have continuous transfer functions. This allows us to find continuous invariant functions for the flow, which provide the main ingredient in the proof of \Cref{thm:main}.\\

Another application of the explicit form of the invariant functions is that one can use them to plot computational approximations of invariant sets for the billiard flow.
An example with $\theta$ a $g_t$-periodic direction is shown in \Cref{fig:InvSet}.

\begin{figure}[h]
\centering
\includegraphics[width=0.8\textwidth]{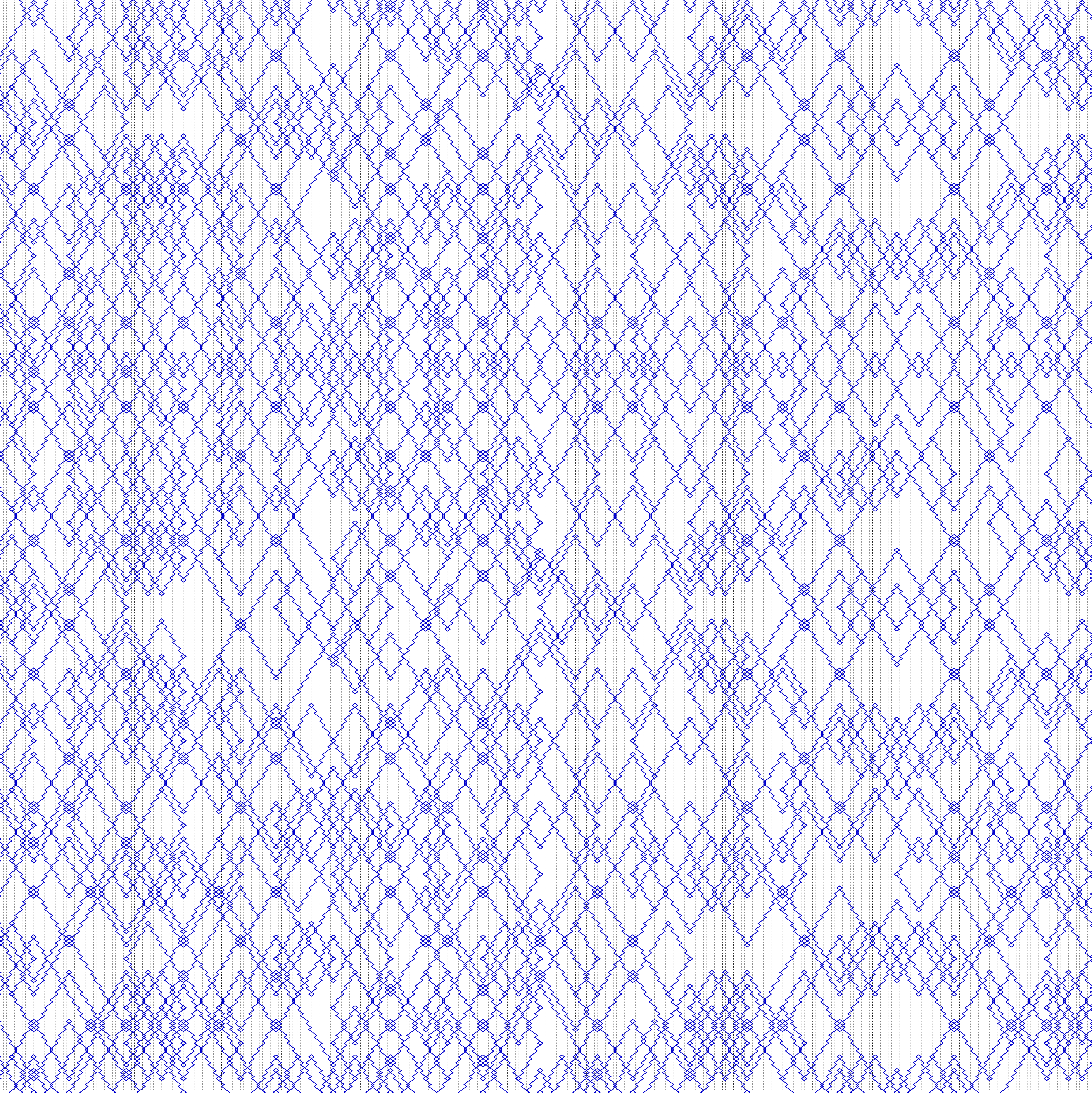}
\caption{A $400\times 400$ window of an invariant set for the billiard in the wind-tree model, in the $g_t$-periodic case.}
\label{fig:InvSet}
\end{figure}

The outline of the paper is as follows. In \Cref{sec:background} we introduce the necessary background on translation surfaces and interval exchange transformations. In \Cref{sec:covers} we discuss covers of translation surfaces and the geometry of the wind-tree model and the relevant translation surfaces. In \Cref{sec:invariant} we prove the existence of continuous invariant functions for the straight-line flow. Finally in \Cref{sec:hdim} we show that in the unstable $g_t$-periodic case the invariant sets given by level sets of the aforementioned invariant functions have Hausdorff dimension less than 2, proving \Cref{thm:main}. In addition, in \Cref{sec:plotting} we say a few words on how one can plot the invariant sets as in \Cref{fig:InvSet}.\\

\subsection{Acknowledgments}
I would like to thank my advisor Corinna Ulcigrai for teaching me about the Ehrenfest wind-tree model and encouraging me to work on this problem, and for her continuous support during this work. I am grateful to Krzystof Fr\c{a}czek, Pascal Hubert and Anton Zorich for helpful discussions, and I would like to thank Andrea Ulliana for showing me how to simplify the proof of \Cref{thm:hdim}. This work was supported by the Swiss National Science Foundation through Grant 213663.

\section{Background on translation surfaces and IETs}\label{sec:background}

A \emph{translation surface} is a tuple $(M,\omega)$, where $M$ is an oriented surface (which may not be compact) and $\omega$ is a complex structure on $M$ with an \emph{abelian differential}, i.e. a non-zero 1-form which is holomorphic with respect to the complex structure. We denote by $\Sigma \subset M$ the set of zeros of $\omega$, also called the \emph{singular points} of $(M,\omega)$. 

For a direction $\theta \in S^1$, denote by $X^\theta$ the unit vector field in direction $\theta$ on $M\setminus \Sigma$. Denote by $(\varphi^\theta_t)_{t\in \R}$ the corresponding flow, which is called the \emph{straight-line flow in direction $\theta$}. We denote the straight-line flow in the vertical direction by $\varphi^v_t$. The flow $(\varphi^\theta_t)_{t\in \R}$ preserves the natural volume form on $(M,\omega)$ given by $\Re \omega \wedge \Im \omega$.

\subsection{Teichmüller flow}
\subsubsection{Teichmüller space and moduli space}
Denote by $\mathrm{Diff}^+(M)$ the group of orientation-preserving diffeomorphisms of $M$, and by $\mathrm{Diff}_0^+(M)$ the subgroup of diffeomorphisms isotopic to the identity. The quotient $\Gamma(M) := \mathrm{Diff}^+(M)/\mathrm{Diff}^+_0(M)$ is the \emph{mapping class group} of $M$. 

The group $\mathrm{Diff}^+(M)$ acts on the space of abelian differentials on $M$ by pre-composition. Denote by $\mathcal{T}(M)$ the \emph{Teichmüller space of abelian differentials}, the space of orbits of the action restricted to $\mathrm{Diff}_0^+(M)$. Denote by $\mathcal{M}(M)$ the \emph{moduli space of abelian differentials}, the space of orbits of the action of the entire group $\mathrm{Diff}^+(M)$. Then $\mathcal{M}(M) = \mathcal{T}(M)/\Gamma(M)$.\\

Defining the \emph{area} of an abelian differential $\omega$ by $A(\omega)  = \int_M \Re \omega \wedge \Im \omega$, we define also the Teichmüller and moduli spaces of unit-area abelian differentials by $\mathcal{T}_1(M) := \{[\omega] \in \mathcal{T}(M): A(\omega) = 1\},\, \mathcal{M}_1(M) := \{[\omega] \in \mathcal{M}(M): A(\omega) = 1\}$.

\subsubsection{The $SL(2,\R)$ action}
The group $SL(2,\R)$ has a natural action on $\mathcal{T}(M)$ and $\mathcal{M}(M)$ in the following way. Given a translation structure $\omega$, its primitives define charts to $\C$. Post-composition of these charts by an element $g$ of $SL(2,\R)$ gives new charts that define a new complex structure and a new holomorphic abelian differential, thus defining a new translation structure $g\cdot \omega$.

The \emph{Teichmüller geodesic flow} $(g_t)_{t\in \R}$ is the restriction of the $SL(2,\R)$ action to the diagonal subgroup 
$\{g_t := \mathrm{diag}(e^{t},e^{-t}) : t\in \R\}$.

Say that $\omega$ is \emph{$g_t$-periodic} if it is periodic under the Teichmüller geodesic flow on the moduli space, meaning that for some $T_0$, $g_{T_0} \cdot \omega = \omega$ in $\mathcal{M}(M)$.

We will also sometimes refer to the restriction of the action to $SO(2,\R)$, denoting by $r_\theta$ the matrix of rotation by $\theta$.

\subsubsection{The Kontsevich-Zorich cocycle}
Define the \emph{homological Hodge bundle} over $\mathcal{M}(M)$ by 
\[\mathcal{H}_1(M) :=  (\mathcal{T}_1(M) \times H_1(M,\R)) / \Gamma(M).\]
Here $\Gamma(M)$ acts on $\mathcal{T}_1(M)$ by pre-composition and on $H_1(M,\R)$ by the natural pushforward action. \\

The \emph{Kontsevich-Zorich cocycle} $(G^{KZ}_t)_{t\in \R}$ is the quotient by $\Gamma(M)$ of the trivial cocycle 
\[g_t \times \Id : \mathcal{T}_1(M) \times H_1(M,\R) \to \mathcal{T}_1(M) \times H_1(M,\R)\]
to $\mathcal{H}_1(M,\R)$.

Note that the Kontsevich-Zorich cocycle is usually defined on the cohomological Hodge bundle $\mathcal{H}^1(M) :=  (\mathcal{T}_1(M) \times H^1(M,\R)) / \Gamma(M)$, but we can identify the homological and cohomological bundles via the Poincaré duality $\mathcal{P}: H_1(M,\R) \to H^1(M,\R)$. This identification allows us to define the Hodge norm on the homological bundle by pulling back the Hodge norm on the cohomological bundle. 

The \emph{Hodge norm} on $\mathcal{H}^1(M,\R)$ is defined as follows. Given $ c \in H^1(M,\R)$, there exists a unique one-form $\eta$ on $M$ which is holomorphic with respect to the complex structure of $\omega$ and satisfies $c = [\Re \eta] \in H^1(M,R)$. Then the Hodge norm of $c$ is given by 
\[\|c\|_\omega := \left(\frac{1}{2}\int_M \eta \wedge \bar \eta\right)^\frac{1}{2}.\]

We will denote the Hodge norm on $\mathcal{H}_1(M,\R)$ by $\|\sigma\|_\omega := \|\mathcal{P}(\sigma)\|_\omega$.\\

For a compact orientable surface $M$, the homology $H_1(M,\R)$ has a natural symplectic structure arising from the algebraic intersection number. This symplectic structure is preserved by the action of $\Gamma(M)$ and hence the Kontsevich-Zorich cocycle acts symplectically.

\subsubsection{Orbit closures and $SL(2,\R)$-invariant subbundles}
Let $\omega \in \mathcal{M}_1(M)$ and let $\mathcal{N} = \overline{SL(2,\R)\cdot\omega}$ be its orbit closure. By the work of Eskin, Mirzakhani and Mohammadi in \cite{EM,EMM}, $\mathcal{N}$ is an affine $SL(2,\R)$-invariant submanifold of $\mathcal{M}_1(M)$, and there is a probability measure $\nu_\mathcal{M}$ supported on $\mathcal{N}$ which is invariant for the $SL(2,\R)$ action and ergodic for the Teichmüller geodesic flow.

Suppose that $V \subset H_1(M,\R)$ is a symplectic subspace.
An \emph{$SL(2,\R)$-invariant subbundle over $\mathcal{N}$} with fibre $V$ is a subbundle $\mathcal{V} = \bigcup_{\omega \in \mathcal{N}} \{\omega\} \times V(\omega)$ of $\mathcal{H}_1(M,\R)$, such that for all $\omega \in \mathcal{N}$, $V(\omega) \cong V$, and for any $g\in SL(2,\R)$, $V(g\cdot \omega) = V(\omega)$.

We denote the Kontsevich-Zorich cocycle restricted to $\mathcal{V}$ by $(G^\mathcal{V}_t)_{t\in\R}$. 

\subsection{Generic properties of a translation surface}
Let $\mathcal{N} = \overline{SL(2,\R)\cdot \omega}$, and let $\nu_\mathcal{N}$ be the corresponding $SL(2,\R)$-invariant measure on $\mathcal{N}$. Fix a symplectic subspace $V \subset H_1(M,\R)$ and consider an invariant subbundle $\mathcal{V}$ over $\mathcal{N}$ with fibre $V$.

We state three important properties satisfied by a $\nu_\mathcal{N}$-generic translation surface $(M,\omega) \in \mathcal{N}$. We first state the definitions of the properties, then the theorems which say that these properties are indeed generic. In fact stronger than holding for $\nu_\mathcal{N}$- almost every $(M,\omega)$, by the results of Chaika-Eskin \cite{ChaikaEskin}, these properties hold for every surface when rotated by almost every $\theta \in S^1$.

\begin{defn} We say that $(M,\omega)$ is \emph{Birkhoff generic} if the Birkhoff ergodic theorem holds for the Teichmüller geodesic flow at $\omega$, i.e. for every continuous integrable function $\phi$ on $\mathcal{N}$, 
\[\lim_{T \to \infty} \int_0^T \phi(g_t\omega) \,\ud t = \int_\mathcal{N} \phi \,\ud \nu_\mathcal{N}.\]
\end{defn}

\begin{defn} We say that $(M,\omega)$ is \emph{Oseledets generic} if there exists an Oseledets splitting for the Kontsevich-Zorich cocycle $G^\mathcal{V}_t$ with respect to $\nu_\mathcal{N}$, i.e. there exist Lyapunov exponents 
\[\lambda_1 > \lambda_2 > \dots > \lambda_s,\]
and a direct splitting $V = \bigoplus_{i=1}^s V_i(\omega)$, such that or all $\xi \in V_i$,
\[\lim_{t\to \infty} \frac{1}{t} \log \|\xi\|_{g_t\omega} = \lambda_i.\]
\end{defn}  

If $\omega$ is Oseledets generic, $V$ has a direct splitting into \emph{stable}, \emph{central} and \emph{unstable} subspaces,
\[V = E^-_\omega \oplus E^0_\omega \oplus E^+_\omega,\]
where
\begin{align*}
E^-_\omega := \{\xi \in V: \lim_{t\to\infty} \frac{1}{t} \log \|\xi\|_{g_t\omega} < 0\},\\
E^0_\omega := \{\xi \in V: \lim_{t\to\infty} \frac{1}{t} \log \|\xi\|_{g_t\omega} = 0\},\\
E^+_\omega := \{\xi \in V: \lim_{t\to\infty} \frac{1}{t} \log \|\xi\|_{g_t\omega} > 0\}.
\end{align*}

\begin{defn}
We say that $(M,\omega)$ is \emph{Masur generic} if both the vertical and the horizontal flows are uniquely ergodic on $(M,\omega)$.
\end{defn}

Say that $(M,\omega)$ is \emph{BOM generic} if it is generic in all three senses above.

\begin{thm}[Theorems 1.1 and 1.2 in \cite{ChaikaEskin}]
\hspace{1cm}

\noindent For every $(M,\omega)$, and almost every $\theta \in S^1$, $(M,r_\theta \omega)$ is Birkhoff and Oseledets generic.
\end{thm}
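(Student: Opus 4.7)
The plan is to prove both genericity statements simultaneously by reducing them to an equidistribution statement for arcs of the circle $\{r_\theta \omega : \theta \in S^1\}$ translated by the Teichmüller geodesic flow, and then extract almost-everywhere information via a Fubini-type argument. The target is: for every continuous compactly supported test object $\phi$ on $\mathcal{N}$ (respectively on the projectivization of $\mathcal{H}_1(M,\R)$ over $\mathcal{N}$), and for almost every $\theta$,
\[\frac{1}{T}\int_0^T \phi(g_t r_\theta \omega)\,\ud t \to \int_\mathcal{N} \phi\, \ud \nu_\mathcal{N}.\]
Once this is established for a countable dense family of $\phi$ in $C_c(\mathcal{N})$, a standard approximation argument upgrades it to all continuous integrable $\phi$, yielding Birkhoff genericity of $r_\theta \omega$ for a.e. $\theta$.

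The central analytic input is the equidistribution of circles: for \emph{every} $\omega \in \mathcal{M}_1(M)$ and every $\phi \in C_c(\mathcal{N})$,
\[\frac{1}{2\pi}\int_0^{2\pi} \phi(g_t r_\theta \omega)\,\ud \theta \longrightarrow \int_\mathcal{N} \phi \, \ud \nu_\mathcal{N} \quad \text{as } t\to\infty.\]
The plan is to derive this from the Eskin--Mirzakhani--Mohammadi classification of $SL(2,\R)$-invariant measures and orbit closures, together with a non-divergence estimate (in the style of Masur and Kleinbock--Minsky) ensuring no mass escapes to infinity. The heart of the argument consists of showing that any weak-$*$ limit of the circle averages is $SL(2,\R)$-invariant, which combined with the measure classification forces it to be $\nu_\mathcal{N}$.

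To pass from circle equidistribution to Birkhoff genericity at a.e. $\theta$, I would integrate the circle equidistribution in $t$, apply Fubini, and couple it with a maximal inequality for the Teichmüller flow (using the non-divergence estimate to control excursions into the thin part of moduli space) so as to exchange the limit in $t$ with the integral in $\theta$. This yields a set of full measure in $\theta$, depending on $\phi$, where the Birkhoff averages converge; taking a countable dense family of test functions and intersecting the resulting full-measure sets gives the conclusion.

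For Oseledets genericity, I would apply the Oseledets multiplicative ergodic theorem to $(G^\mathcal{V}_t)$ with respect to $(\mathcal{N},\nu_\mathcal{N})$ to obtain a full-measure set $\Omega \subset \mathcal{N}$ of Oseledets-regular points. Since $\Omega$ is $g_t$-invariant of full measure, Birkhoff genericity of $r_\theta \omega$ applied to (approximations of) $\mathbf{1}_\Omega$, together with the fact that Oseledets regularity propagates along the $g_t$-orbit, shows that $r_\theta \omega \in \Omega$ for a.e. $\theta$; the continuity of the Hodge norm in $\omega$ then lets one read off the correct exponential growth rates on each Oseledets subspace along the geodesic started at $r_\theta \omega$. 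The main obstacle throughout is the equidistribution of circles: every other step is soft, but this step requires the full strength of the measure-classification and non-divergence machinery, and it is essential that it hold for \emph{every} starting $\omega$, not merely almost every one.
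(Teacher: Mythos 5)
This statement is not proved in the paper at all; it is imported verbatim as a citation of Theorems~1.1 and~1.2 of Chaika and Eskin \cite{ChaikaEskin}, so there is no internal proof to compare your attempt against. What you have written is a plausible high-level reconstruction of the Chaika--Eskin strategy (Eskin--Mirzakhani--Mohammadi equidistribution for circles, a non-divergence input in the Eskin--Masur/Athreya/Kleinbock--Minsky style to prevent escape of mass, and a Fubini/maximal-inequality argument to descend from circle averages to almost-every-$\theta$ statements), and it is reasonable to expect this skeleton is roughly what their paper implements.

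That said, there are two places where the sketch understates the difficulty. First, you label the passage from circle equidistribution to pointwise ($\theta$-a.e.) Birkhoff genericity as ``soft''; this is backwards. The circle equidistribution is a self-contained deep theorem that can be cited, but the exchange of limits --- going from $\lim_{t}\frac{1}{2\pi}\int_0^{2\pi}\phi(g_t r_\theta\omega)\,\ud\theta = \int\phi\,\ud\nu_\mathcal{N}$ to $\lim_T \frac{1}{T}\int_0^T\phi(g_t r_\theta\omega)\,\ud t = \int\phi\,\ud\nu_\mathcal{N}$ for a.e.~$\theta$ --- is precisely the new technical content of Chaika--Eskin, and ``couple it with a maximal inequality'' does not supply it; the relevant maximal estimate is not classical and has to be constructed with care using non-divergence and hyperbolicity of $g_t$. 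Second, for Oseledets genericity, your route (apply Birkhoff genericity to approximants of $\mathbf{1}_\Omega$, use $g_t$-invariance of the Oseledets-regular set $\Omega$) can be made to work in principle, but you need Birkhoff genericity for not-compactly-supported test functions to control the tail of $\mathbf{1}_\Omega$, which again requires non-divergence; moreover this is not how Chaika--Eskin argue Theorem~1.2, which has its own cocycle-level argument. So the proposal is an acceptable roadmap, but it does not amount to an independent proof, and the two steps you treat as routine are exactly the steps the cited paper exists to carry out.
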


\begin{thm}[Theorem 2 in \cite{KMS}]
\hspace{1cm}

\noindent For every $(M,\omega)$, and almost every $\theta \in S^1$, $(M,r_\theta \omega)$ is Masur generic.
\end{thm}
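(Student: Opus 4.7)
The plan is to deduce the statement from \emph{Masur's criterion} for unique ergodicity combined with Lebesgue-a.e.\ recurrence of the Teichmüller geodesic flow along circle orbits. Masur's criterion says that if the forward Teichmüller trajectory $(g_t \omega)_{t\geq 0}$ returns to some compact subset of $\mathcal{M}_1(M)$ infinitely often, then the vertical straight-line flow on $(M,\omega)$ is uniquely ergodic. I would take this as a black box; the underlying idea is that any ergodic invariant measure for the vertical flow gives rise to a horizontal period-like cycle whose Hodge norm must remain bounded along $(g_t\omega)$, and on a recurrent Teichmüller trajectory only the cycle coming from Lebesgue has this property.

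Given the criterion, the main task is to show that for \emph{every} $(M,\omega)$, Lebesgue-a.e.\ $\theta \in S^1$ has the property that $(g_t r_\theta \omega)_{t\geq 0}$ is recurrent in $\mathcal{M}_1(M)$; equivariance under rotation by $\pi/2$ then yields the horizontal statement as well. Working inside the $SL(2,\R)$-orbit closure $\mathcal{N} = \overline{SL(2,\R)\cdot \omega}$ with its affine invariant probability measure $\nu_\mathcal{N}$, the first ingredient is a volume estimate for the thin parts: if $K_\varepsilon \subset \mathcal{N}$ denotes the set of surfaces whose shortest saddle connection has length at most $\varepsilon$, then $\nu_\mathcal{N}(K_\varepsilon) = O(\varepsilon^2)$. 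This follows from the local product structure of $\nu_\mathcal{N}$ in period coordinates, since demanding a saddle connection of length at most $\varepsilon$ imposes a real codimension-$2$ linear condition.

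With the thin-part volume estimate in hand, I would invoke a Margulis-type quantitative non-divergence estimate for the Teichmüller flow along the $SO(2)$-orbit of $\omega$, in the spirit of Eskin--Masur or Minsky--Weiss. Such an estimate bounds, for each $\varepsilon$ and $T$, the Lebesgue measure of those $\theta \in S^1$ for which the trajectory $\{g_t r_\theta \omega : t \in [0,T]\}$ spends more than a prescribed fraction of its time inside $K_\varepsilon$. A Borel--Cantelli argument along a sequence $\varepsilon_n \to 0$ then shows that for a.e.\ $\theta$ the forward Teichmüller trajectory visits the compact set $\mathcal{M}_1(M)\setminus K_{\varepsilon_0}$ for a positive density of times and some fixed $\varepsilon_0 > 0$, hence is recurrent as required.

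The main obstacle is the quantitative non-divergence estimate for circle orbits, which is the technical core of the original KMS argument. One must rule out the scenario in which several saddle connections become simultaneously short along $(g_t r_\theta \omega)$; this is handled by exploiting the symplectic intersection pairing, which forces a lower bound on the product of lengths of two transversely intersecting short saddle connections, so that only ``parallel families'' of short connections can contribute to an excursion into the thin part. In a modern writeup I would simply cite the Eskin--Masur quantitative recurrence theorem, bypassing most of the saddle-connection bookkeeping.
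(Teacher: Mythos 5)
The paper gives no proof of this statement: it is quoted verbatim as Theorem~2 of \cite{KMS} (Kerckhoff--Masur--Smillie) and used as a black box, so there is no in-paper argument to compare against. Your sketch is a standard \emph{modern} route to the KMS theorem and is sound at that level: Masur's recurrence criterion reduces unique ergodicity to non-divergence of $(g_t r_\theta \omega)_{t\ge 0}$, quantitative non-divergence for circle orbits (Minsky--Weiss, or Eskin--Masur) supplies this for Lebesgue-a.e.\ $\theta$ and \emph{every} $\omega$, and applying the vertical statement to both $\omega$ and $r_{\pi/2}\omega$ gives the stated Masur-genericity (both foliations uniquely ergodic) for a.e.\ $\theta$. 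The original KMS proof follows the same two-step scheme but implements the non-divergence step with a bare-hands extremal-length / hyperbolic-geometry argument on Teichm\"uller space rather than the later $(C,\alpha)$-good function machinery, so your version is cleaner at the cost of heavier citations. One inaccuracy worth flagging: the thin-part volume bound $\nu_{\mathcal{N}}(K_\varepsilon) = O(\varepsilon^2)$ is not actually an input to the circle-orbit non-divergence estimate you invoke. That estimate is \emph{pointwise} in $\omega$ and proceeds by a covering argument on the compact circle $SO(2)\cdot\omega$ using the good-function property of saddle-connection lengths; it never integrates against $\nu_{\mathcal{N}}$. Averaging a thin-part volume bound against $\nu_{\mathcal{N}}$ could only give recurrence for $\nu_{\mathcal{N}}$-a.e.\ surface, which is strictly weaker than the ``every $\omega$, a.e.\ $\theta$'' conclusion required here. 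Dropping that paragraph and going straight to the quantitative non-divergence would make the sketch tighter and more accurate.
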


\subsection{IETs}
Fix a transverse segment $I$ to the direction $\theta$ on a translation surface. Since  the straight-line flow $(\varphi^\theta_t)_{t\in \R}$ is volume preserving, its Poincaré map on $I$ is an orientation-preserving piecewise isometry of the interval. Such maps are called \emph{interval exchange transformations}.

An \emph{Interval Exchange Transformation} (IET for short) of $n$ intervals is a map $T:[a,b)\to [a,b) $ 
which is a piecewise 
orientation-preserving isometry, with all discontinuities contained in a set of $n-1$ marked points $\{a < x_1 <\dots < x_{n-1} < b\}$. 
Let $x_0 = a, x_n = b$ and let $I_i$ be the interval $[x_{i-1},x_i)$ for $1\leq i\leq n$. Denote the intervals of continuity of $T$ by $\{I_j: j\in \mathcal{A}\}$, where $\mathcal{A} = \{1,\dots,n\}$.

Say that $T$ satisfies \emph{Keane's condition} if for any $0\leq i \leq n$, any $m\geq 1$, $T^m(x_i)$ does not belong to the set of marked points 
$\{x_1,\dots,x_{n-1}\}$. Let $\mathcal{X}_n$ be the space of IETs of $n$ intervals satisfying Keane's condition. Let $\mathcal{X}^0_n \subset \mathcal{X}_n$ be 
the subspace of IETs defined on the unit interval $I = [0,1)$.

A key tool for studying IETs is the Rauzy-Veech renormalisation $\mathcal{R}$. 
We do not give an explicit definition of Rauzy-Veech renormalisation here, only recalling some basic properties, and refer the reader to the many excellent 
introductions available in the literature, such as \cite{Viana}, \cite{Yoccoz}, \cite{Zorich}.

The \emph{Rauzy-Veech induction} is a map $\hat{\mathcal{R}}:\mathcal{X}_n \to \mathcal{X}_n$. Given $T:[a,b)\to [a,b)$,   
the Rauzy-Veech induction procedure
produces a sequence of shrinking nested intervals $[a,b)=I^{(0)} \supset I^{(1)} \supset I^{(2)} \supset \dots$, which all have the same left endpoint. 
The map $\hat{\mathcal{R}}^k(T)$ is defined as the induced map of $T$ on $I^{(k)}$, also denoted by $T^{(k)}:I^{(k)}\to I^{(k)}$. 
The subintervals of $I^{(k)}$ permuted by $T^{(k)}$ are labelled by $I^{(k)}_i$ for $i \in \mathcal{A}$, where the order of these subintervals depends on $T$.

The \emph{Rauzy-Veech renormalisation} is a map $\mathcal{R}:\mathcal{X}^0_n \to \mathcal{X}^0_n$, defined by 
$\mathcal{R}(T) = \frac{\1T}{|\1I|}$.

For $0\leq r < k$ let $q^{(r,k)}_j$ be the return time of $T^{(r)}$ to $I^{(k)}$ on $I^{(k)}_j$, i.e. 
\[q^{(r,k)}_j = \min\Big\{m \geq 1 : \big(T^{(r)}\big)^m \Big( I^{(k)}_j\Big) \subset I^{(k)}\Big\}.\]

The \emph{Rokhlin tower} $Z^{(r,k)}_j$ is the union 
\[Z^{(r,k)}_j = \bigsqcup_{l=0}^{q^{(r,k)}_j-1} \big(T^{(r)}\big)^l \left( I^{(k)}_j \right).\]

Then by definition of the induced map and $q^{(r,k)}_j$, $I^{(r)} = \bigsqcup_{j=1}^n Z^{(r,k)}_j$.

For $0 \leq l < q^{(r,k)}_j$ we call $\big(T^{(r)}\big)^l \left( I^{(k)}_j \right)$ the $l^\text{th}$ \emph{floor} of the tower $Z^{(r,k)}_j$, and $I^{(k)}_j$ the \emph{base}.\\

The \emph{Rauzy-Veech cocycle} is defined as integer-valued $d\times d$ matrices $A^{(r,k)}$, with
\[A^{(r,k)}_{ij} = \#\left\{0\leq l < q^{(r,k)}_j: \big(T^{(r)}\big)^l \left(I^{(k)}_j\right) \subset I^{(r)}_i\right\}.\]
Thus $A^{(r,k)}_{ij}$ counts the number of floors in the tower $Z^{(r,k)}_j$ which are subsets of $I^{(r)}_i$.
It is a cocycle since for $r<s<k$, $A^{(r,k)}_{ij} = \sum_{t=1}^n A^{(r,s)}_{it} A^{(s,k)}_{tj}$, and so $A^{(r,k)} = A^{(r,s)}A^{(s,k)}$.\\

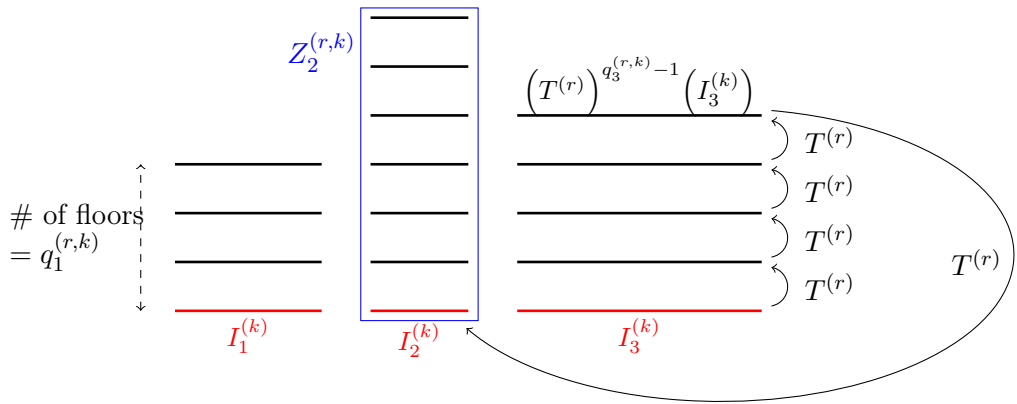
\begin{figure}[H]
\centering
\begin{tikzpicture}[scale=6.5]
	\draw[red,line width=1] (0,0) -- (0.3,0);
	\draw[red,line width=1] (0.4,0) -- (0.6,0);
	\draw[red,line width=1] (0.7,0) -- (1.2,0);
	
	\draw[line width=1] (0,0.1) -- (0.3,0.1);
	\draw[line width=1] (0.4,0.1) -- (0.6,0.1);
	\draw[line width=1] (0.7,0.1) -- (1.2,0.1);
	
	\draw[line width=1] (0,0.2) -- (0.3,0.2);
	\draw[line width=1] (0.4,0.2) -- (0.6,0.2);
	\draw[line width=1] (0.7,0.2) -- (1.2,0.2);
	
	\draw[line width=1] (0,0.3) -- (0.3,0.3);
	\draw[line width=1] (0.4,0.3) -- (0.6,0.3);
	\draw[line width=1] (0.7,0.3) -- (1.2,0.3);
	
	\draw[line width=1] (0.4,0.4) -- (0.6,0.4);
	\draw[line width=1] (0.7,0.4) -- (1.2,0.4);
	
	\draw[line width=1] (0.4,0.5) -- (0.6,0.5);
	
	\draw[line width=1] (0.4,0.6) -- (0.6,0.6);
	
	\draw[->] (1.22,0.01) arc (-80:80:0.04);
	\draw[->] (1.22,0.11) arc (-80:80:0.04);
	\draw[->] (1.22,0.21) arc (-80:80:0.04);
	\draw[->] (1.22,0.31) arc (-80:80:0.04);
	
	\node at (1.34,0.05) {$T^{(r)}$};
	\node at (1.34,0.15) {$T^{(r)}$};
	\node at (1.34,0.25) {$T^{(r)}$};
	\node at (1.34,0.35) {$T^{(r)}$};
	
	\draw[->] (1.22,0.41) arc (80:-150:0.6 and 0.3);
	\node at (1.64,0.1) {$T^{(r)}$};
		
	\node[red] at (0.15,-0.05) {\small $I^{(k)}_{1}$};
	\node[red] at (0.5,-0.06) {\small $I^{(k)}_{2}$};
	\node[red] at (0.95,-0.05) {\small $I^{(k)}_{3}$};
	
	\node at (0.95,0.46) {\small $\Big(T^{(r)}\Big)^{q^{(r,k)}_3-1} \Big(I^{(k)}_{3}\Big)$};
	
	\draw[<->,dashed] (-0.07,0) -- (-0.07,0.3);
	\node[text width = 1.8 cm] at (-0.2,0.15) {$\#$ of floors \\ $= q_1^{(r,k)}$}; 
	
	\draw[blue] (0.38,-0.02) -- (0.62,-0.02) -- (0.62,0.62) -- (0.38,0.62) -- cycle;
	\node[blue] at (0.3,0.53) {$Z^{(r,k)}_2$};
	
\end{tikzpicture}
\caption{An example of towers for an IET on 3 intervals. The union of the base intervals is $I^{(k)}$, and the union of all the floors  (including the base) is $I^{(r)}$. From the towers we can not see exactly the image under $T^{(r)}$ of the top floor of a tower, but we do know that it gets sent into $I^{(k)}$, hence somewhere in the union of the bases. }
\end{figure}

We say an IET $T$ is of \emph{periodic type} if it is a fixed point of $\mathcal{R}^N$ for some $N$. For Keane IETs it follows automatically that some power of the matrix $A^{(0,N)}$ is positive, hence we do not need to include this condition as part of the definition.\\

Given any function $g$ on $I$, denote its Birkhoff sums with respect to $T$ by $S^T_n g(x) := \sum_{k=0}^{n-1} g(T^k x)$.

Consider a piecewise constant function $\phi$ on $I$, such that $\phi$ is constant on every $I_j$ for $j \in \mathcal{A}$. The following is a standard fact, allowing one to compute the Birkhoff sums of $\phi$ at special times using the Rauzy-Veech cocycle. For a proof, see for example \cite[Lemma 3.10]{Tum}.

\begin{lemma}\label{lem:cocycle}
Let $\phi$ be a piecewise constant function, constant on each $I_j$, let $\phi^{(k)} = (A^{(0,k)})^T \phi$. Then for any $j\in \mathcal{A}$ and any $k>0$, for $x\in I^{(k)}_j$, $S^T_{q_j^{(0,k)}} \phi (x) = \phi^{(k)} (x)$.
\end{lemma}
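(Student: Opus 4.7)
The plan is to expand $S^T_{q_j^{(0,k)}} \phi(x)$ by decomposing the orbit segment $x, Tx, \ldots, T^{q_j^{(0,k)}-1}x$ according to which of the initial continuity intervals $I_i$ each iterate lands in, and then to recognise the resulting count as the defining formula for the Rauzy-Veech cocycle entry $A^{(0,k)}_{ij}$.

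I would begin by identifying $\phi$ with the vector $(\phi_i)_{i\in\mathcal{A}}$, where $\phi_i$ is the constant value of $\phi$ on $I_i$, so that $\phi^{(k)} = (A^{(0,k)})^T \phi$ is the vector whose $j$-th entry is $\sum_{i\in\mathcal{A}} A^{(0,k)}_{ij}\phi_i$ (and $\phi^{(k)}$ is constant on $I^{(k)}_j$ with this value). Fixing $x \in I^{(k)}_j$, by the tower structure the iterates $T^l x$ for $0 \le l < q_j^{(0,k)}$ visit the successive floors $T^l(I^{(k)}_j)$ of the Rokhlin tower $Z^{(0,k)}_j$. The key observation is that each such floor is contained in a single initial interval $I_i$: indeed, if a floor straddled a discontinuity of $T$ then $T^{l+1}$ would fail to be an isometry on $I^{(k)}_j$, contradicting the fact that $I^{(k)}_j$ is by construction a continuity interval for the first-return map $T^{(k)}$. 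Hence $\phi(T^l x)$ is independent of the choice of $x \in I^{(k)}_j$ and equals $\phi_i$ whenever $T^l(I^{(k)}_j) \subset I_i$.

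Summing, I obtain
\[S^T_{q_j^{(0,k)}}\phi(x) \;=\; \sum_{i\in\mathcal{A}} \phi_i \cdot \#\bigl\{0\le l<q_j^{(0,k)}: T^l(I^{(k)}_j)\subset I_i\bigr\}.\]
By the definition of the Rauzy-Veech cocycle with $r=0$, the count in the bracket is exactly $A^{(0,k)}_{ij}$, so the right-hand side equals $((A^{(0,k)})^T \phi)_j = \phi^{(k)}(x)$, as desired.

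There is no real obstacle here: the lemma is essentially a bookkeeping identity. The one step worth stating explicitly is the observation that each floor of the tower is contained in a single initial continuity interval of $T$, which is what makes the entry-counting definition of $A^{(0,k)}$ line up with the Birkhoff sum of a piecewise constant function.
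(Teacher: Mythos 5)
Your proof is correct and is the natural bookkeeping argument one expects; the paper itself does not spell out a proof here but instead cites \cite[Lemma 3.10]{Tum}, and your computation is exactly what such a proof would contain: decompose the return-time orbit segment by tower floors, note each floor lies in a single $I_i$, and identify the resulting counts with the entries $A^{(0,k)}_{ij}$. The one place where you could tighten the justification is the claim that each floor $T^l(I^{(k)}_j)$ lies entirely inside some $I_i$: rather than deducing it from continuity of $T^{(k)}$ on $I^{(k)}_j$ after the fact, it is cleaner to note that this containment is built into the Rauzy--Veech tower construction (the $I^{(k)}_j$ are precisely the maximal intervals on which the return time to $I^{(k)}$ is constant and whose forward orbit up to the return time avoids the discontinuities of $T$), which is also why the tower decomposition $I = \bigsqcup_j Z^{(0,k)}_j$ with interval floors makes sense in the first place.
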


Rauzy-Veech renormalisation is a discretisation of the Teichmüller geodesic flow, and the Rauzy-Veech cocycle is a discretisation of the Kontsevich-Zorich cocycle. (See \cite{Zorich}.) In particular, a translation surface $(M,\omega)$ being $g_t$-periodic implies that any IET which is a Poincaré section of the vertical flow is periodic under Rauzy-Veech renormalisation.

\subsection{Stable space and coboundaries}
Let $I$ be a horizontal interval on $M\setminus \Sigma$ that has no self-intersections. The first return map of the flow to $I$ is an IET $T:I \to I$, label its continuity intervals by $\{I_\alpha : \alpha \in \mathcal{A}\}$. For $\alpha \in \mathcal{A}$ denote by $\xi_\alpha \in H_1(M,\Z)$ the homology class of the loop obtained by following the flow $(\varphi^\theta_t)_{t\in\R}$ from any point $x\in I_\alpha$ until $T(x)$ and then returning to $x$ along the segment $[x,T(x)] \subset I$.
Given a homology class $\gamma \in H_1(M,\R)$, we can define a corresponding piecewise constant function $\phi_\gamma$ on $I$ by
\[\phi_\gamma|_{I_\alpha} := \langle \gamma, \xi_\alpha \rangle.\]

Let $\Phi: H_1(M,\R) \to \R^n$ be the functions that assigns to the homology class $\gamma$ the piecewise constant function $\phi_\gamma$ as defined above.

Note that if $\gamma \in H_1(M,\Z)$, then $\Phi(\gamma) \in \Z^n$.\\

The following combination of existing results, connecting negative Lyapunov exponents and coboundaries for the Poincaré section, is the key result that we apply.

In \cite{MMY1}, Marmi, Moussa and Yoccoz introduced a full measure Diophantine condition on IETs called \emph{Roth type}. We will not reproduce the definition here since we will not need to use it directly and refer the interested reader to \cite{MMY1}.

\begin{thm}[See Theorem A in \cite{MMY1} and Corollary 3.6 in \cite{MMY2}]\label{thm:MMY}
Let $T$ be an IET of Roth type. Suppose $\psi$ is a piecewise function that is constant on the intervals of continuity of $T$. Suppose that $\psi$ is a coboundary, then there exists a continuous function $h$ such that $\psi = h \circ T -h$.
\end{thm}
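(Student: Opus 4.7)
The plan is to construct the continuous transfer function $h$ through a telescoping construction along the Rauzy-Veech towers, where the Roth-type Diophantine condition will provide the decay estimates needed for uniform convergence.

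First I would translate the coboundary assumption into a statement about the Rauzy-Veech cocycle. By \Cref{lem:cocycle}, for $x \in I^{(k)}_j$ the Birkhoff sum $S^T_{q^{(0,k)}_j}\psi(x)$ equals $\psi^{(k)}(x) = ((A^{(0,k)})^T\psi)_j$. If $\psi = h_0 \circ T - h_0$ for some measurable $h_0$, then this Birkhoff sum equals $h_0(T^{q^{(0,k)}_j}x) - h_0(x)$, hence is bounded in measure uniformly in $k$. Because the columns of $A^{(0,k)}$ grow along the unstable and central directions of the cocycle while shrinking along the stable direction, this uniform $L^0$-boundedness forces the vector $\psi$ (seen in the Oseledets decomposition of the Rauzy-Veech cocycle) to have no central or unstable component. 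Thus $\psi$ lies in the stable subspace, and Roth type yields a quantitative decay estimate of the form $\|\psi^{(k)}\|_\infty \lesssim |I^{(k)}|^\tau$ for some $\tau>0$, i.e. the Birkhoff sums at return times decay as a small positive power of the tower base length.

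Next, I would build $h$ directly using the tower structure. For each level $k$, define a candidate $h_k$ on $I^{(0)}$ by fixing $h_k$ to be constant on each base $I^{(k)}_j$ and extending to the $l$-th floor $(T^{(0)})^l(I^{(k)}_j)$ by setting $h_k((T^{(0)})^l x) = h_k(x) + S^T_l \psi(x)$, so that by construction $h_k \circ T - h_k = \psi$ everywhere except possibly at the top floors of the towers. The defect between $h_k$ and $h_{k+1}$ comes from the Birkhoff sums over segments of length at most $q_j^{(k,k+1)}$ of $\psi^{(k)}$, which by the decay estimate above is controlled by $\|\psi^{(k)}\|_\infty$. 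Choosing the constants on the bases consistently at each level (so that the $h_k$ agree on a single reference point) makes $(h_k)$ a Cauchy sequence in the sup norm, giving a uniform limit $h$ which satisfies $\psi = h\circ T - h$ everywhere.

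Finally, continuity of $h$ follows from the same decay: across any discontinuity of $h_k$, which lies on the boundary of some tower, the jump is bounded by a tail of Birkhoff sums $\sum_{\ell \ge k} \|\psi^{(\ell)}\|_\infty$, which tends to zero. The main obstacle I expect is the first step, where one needs to upgrade the measurable coboundary hypothesis to the quantitative decay of $\psi^{(k)}$. This is precisely where the Roth-type Diophantine condition is used in an essential way: the condition bounds the cocycle norms on the central and unstable subspaces logarithmically, so that boundedness in measure of Birkhoff sums at return times forces exact cancellation outside the stable subspace, and it gives the polynomial-in-scale decay on the stable subspace that is needed for uniform (rather than merely $L^2$) convergence of the tower construction.
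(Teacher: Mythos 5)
This statement is cited in the paper from Marmi--Moussa--Yoccoz and given no proof; the paper treats it as a black box. What you have written is therefore not a ``different route from the paper'' but an attempt to reprove the underlying MMY theorem, and it should be judged as such.

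Your high-level outline does match the spirit of MMY: pass to Birkhoff sums at special return times via \Cref{lem:cocycle}, argue that the coboundary hypothesis confines $\psi$ to a contracting subspace of the Rauzy--Veech cocycle, extract polynomial decay of $\psi^{(k)}$ from the Roth-type condition, and then telescope the partial sums down the towers to build a uniform limit. The final two steps (the Cauchy estimate for the tower construction and continuity via the tail $\sum_{\ell\ge k}\|\psi^{(\ell)}\|_\infty$) are correct in outline. However, there is a genuine gap in the key reduction step. You assert that uniform boundedness in measure of $S^T_{q^{(0,k)}_j}\psi$ ``forces the vector $\psi$ to have no central or unstable component.'' For the unstable part this is fine (exponential growth versus boundedness), but the central part is precisely where the argument is delicate, and you dispose of it with one clause. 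The Rauzy--Veech cocycle on $\R^n$ has a nontrivial central piece coming from the kernel of the symplectic form (the ``boundary'' or relative-homology directions), and for a fixed, non-generic Roth-type IET one does not have an Oseledets decomposition in the naive sense to invoke; what one has instead are the Roth-type conditions (a)--(c), of which condition (b) (the spectral-gap condition) is exactly what is needed to handle the central directions and condition (c) (coherence) is needed to make the stable/unstable decomposition well-behaved along the orbit. These are not automatic consequences of ``Roth type gives a quantitative decay estimate''; they are the technical heart of MMY's proof and they are not reproduced or even named in your sketch. Until that step is carried out, the reduction to the stable subspace and the decay estimate $\|\psi^{(k)}\|_\infty\lesssim|I^{(k)}|^\tau$ remain assertions rather than proved facts, so as written the argument is incomplete rather than incorrect.
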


\begin{thm}\label{thm:coboundary}
Suppose that $(M,\omega)$ is either BOM generic or $g_t$-periodic. Then there exists a horizontal interval $I \subset M\setminus \Sigma$ with no self-intersections, such that the first return map $T: I \to I$ of the vertical flow to $I$ is a minimal ergodic IET, and for every $\alpha \in E^-_\omega$, the piecewise-constant function $\Phi(\alpha)$ on $I$ is a coboundary, with a continuous transfer function $h: I \to \R$ such that $\Phi(\alpha) (x) = h(T(x)) - h(x)$.
\end{thm}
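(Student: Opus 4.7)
The plan is to select a Poincaré section on which Rauzy–Veech renormalisation behaves well, translate the decay of the Kontsevich–Zorich cocycle on the stable space into decay of Birkhoff sums along Rauzy–Veech return times, and then invoke \Cref{thm:MMY} to promote the resulting measurable coboundary to a continuous one.

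First I would choose the interval $I\subset M\setminus\Sigma$. In the $g_t$-periodic case, $I$ is taken so that the first-return IET $T$ of the vertical flow is a fixed point of $\mathcal{R}^N$ for some $N$; such a $T$ automatically satisfies Keane's condition (hence is minimal and uniquely ergodic), and is trivially of Roth type because the Rauzy–Veech cocycle matrices $A^{(0,kN)} = (A^{(0,N)})^k$ have uniformly controlled geometry. In the BOM-generic case, minimality and unique ergodicity follow from Masur genericity, and the fact that a BOM-generic translation surface admits a Poincaré section that is a Roth-type IET is a consequence of Oseledets genericity combined with the Diophantine criteria of \cite{MMY1} (the conditions defining Roth type are exactly those satisfied along Oseledets-generic Rauzy–Veech orbits).

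Next I would connect the cocycle $\Phi$ to the Kontsevich–Zorich cocycle. The key observation is that the Rauzy–Veech cocycle on $\R^\mathcal{A}$ is conjugate, via $\Phi$, to the action of the KZ cocycle on the relevant subspace of homology; equivalently, $(A^{(0,k)})^T \Phi(\alpha) = \Phi\bigl(G^{KZ}_{t_k}\alpha\bigr)$ for the renormalisation times $t_k$. Combining this with \Cref{lem:cocycle}, for $x\in I^{(k)}_j$ the Birkhoff sum $S^T_{q_j^{(0,k)}}\Phi(\alpha)(x)$ is the $j$-th component of $\Phi(G^{KZ}_{t_k}\alpha)$. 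For $\alpha \in E^-_\omega$ the Hodge norm $\|G^{KZ}_{t_k}\alpha\|_{g_{t_k}\omega}$ decays exponentially, and since $\Phi$ is a bounded linear map on each fibre (uniformly in the $g_t$-periodic case, and along the Oseledets-generic orbit in the BOM case), one gets a uniform bound
\[
\sup_{x \in I^{(k)}_j} \bigl|S^T_{q_j^{(0,k)}}\Phi(\alpha)(x)\bigr| \le Ce^{-\lambda t_k},
\qquad \lambda>0.
\]

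From this decay I would deduce that $\Phi(\alpha)$ is a measurable coboundary by the standard Veech/Zorich-style argument: defining partial transfer sums along the Rokhlin towers $Z^{(0,k)}_j$ and using that the tops of the towers have measure tending to zero while the Birkhoff sums at the tops tend to zero exponentially, one extracts a pointwise almost-everywhere limit $h$ with $\Phi(\alpha) = h\circ T - h$ almost everywhere. Finally, since $T$ is of Roth type, \Cref{thm:MMY} upgrades this measurable transfer function to a continuous one, yielding the required $h\colon I\to\R$. The main technical obstacle is the verification of the Roth-type property in the BOM-generic case and the careful management of the ``tops of the towers'' in the construction of $h$; once those are in place, the rest is a matter of assembling \Cref{lem:cocycle}, the exponential estimate, and \Cref{thm:MMY}.
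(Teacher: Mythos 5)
Your overall architecture matches the paper's: reduce to a Poincar\'e section whose first-return IET is of Roth type, obtain a measurable coboundary from the decay of the Kontsevich--Zorich cocycle on $E^-_\omega$, and then invoke \Cref{thm:MMY} to upgrade the transfer function to a continuous one. The difference is that the paper does not re-derive the measurable coboundary step: it cites \cite[Thm.\ 4.2]{FU} and \cite[Thm.\ 4.6]{FH} for the BOM-generic case, and for the $g_t$-periodic case its contribution is the precise observation that the only place Birkhoff genericity enters the proof of \cite[Thm.\ 4.5]{FH} is in establishing $t_k/k\to 0$ for the hitting times of a compact section, which is automatic along a closed Teichm\"uller geodesic. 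Your sketch of the Veech/Zorich-style argument is morally the same route, but it glosses over exactly this point: you attribute the uniform comparability of the Hodge norm with the Euclidean norm on the Rauzy--Veech fibres to ``Oseledets genericity,'' whereas that control is really the recurrence to a compact part of moduli space, i.e.\ the ``B'' in BOM (or periodicity). Similarly, the paper does not deduce Roth type from Oseledets genericity plus the Diophantine criteria of \cite{MMY1}; for the BOM-generic case it cites the explicit verification in \cite[Section 1.2.2]{ChaikaEskin}, and for the periodic case \cite[Section 1.3.4]{MMY1}. So your proposal is correct in spirit and reproduces the paper's skeleton, but the two places where you wave your hands (the Gottschalk--Hedlund-type extraction of $h$ from decay at return times, and the norm comparison along the orbit) are precisely the points the paper offloads to citations or resolves by the $t_k/k\to 0$ observation; if you want a self-contained argument you would need to spell those out.
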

\begin{proof}
In the case that $(M,\omega)$ is BOM generic, the fact that $\Phi(\alpha)$ is a coboundary is proven in \cite[Thm 4.2]{FU} (see also \cite[Thm 4.6]{FH}). It is left to check that the transfer function $h$ is in fact continuous, not just measurable. As shown in \cite[Section 1.2.2]{ChaikaEskin}, the IET $T$ is of Roth type, hence we can apply \Cref{thm:MMY} to conclude that $h$ is  continuous.\\

Consider now the case that $(M,\omega)$ is $g_t$-periodic, with $g_{T_0} \omega = \omega$. Then $(M,\omega)$ is automatically Oseledets and Masur generic. Indeed in this case Oseledets genericity is immediate, as the Kontsevich-Zorich cocycle, at times which are multiples of $T_0$, is given by a constant matrix. Masur genericity in the periodic case is also classical, following from Masur's criterion (\cite{Masur92}).

Thus the only assumption that is not satisfied is Birkhoff genericity. But in the proof as written in \cite[Thm 4.5]{FH}, Birkhoff genericity is used only to show that for some compact section $\mathcal{K}$, the consecutive hitting times $(t_k)_{k\geq 0}$ by the Teichmüller flow $(g_t \omega)_{t\in \R}$ of $\mathcal{K}$ satisfy $\frac{t_k}{k} \to 0$. But in the periodic case, if the closed geodesic enters $\mathcal{K}$ $n$ times, then $t_{n+k} = t_k + T_0$, and so the limit $\lim_{k \to \infty} \frac{t_k}{k} = 0$ is satisfied. 
The rest of the proof carries through without alterations. 

Finally, all periodic type IETs are Roth type (see \cite[Section 1.3.4]{MMY1}), so here we can also apply \Cref{thm:MMY} to conclude.
\end{proof}

\section{Covers}\label{sec:covers}
For a manifold $M$, a \emph{$\Z^d$-cover of $M$} is a covering map $p:\tilde M \to M$ equipped with a Deck group action by $\Z^d$. We will denote the cover simply by $\tilde M$, with the covering map and the action of $\Z^d$ implicit.

Denote by $\langle \cdot, \cdot \rangle$ the intersection form on $H_1(M,\R)$.

\begin{lemma}[See \cite{HooperWeiss} or Section 2 in \cite{FU}]\label{lem:covers}
The $\Z^d$-covers of $M$ are in one-to-one correspondence with $H_1(M,\Z)^d$, in such a way that if we denote the cover corresponding to $\gamma \in H_1(M,\Z)^d$ by $\tilde M_\gamma$, then $\tilde M_\gamma$ has the following property: 

If $\sigma$ is a closed curve on $M$ and 
\[n = (n_1,\dots,n_d) = (\langle \gamma_1, [\sigma] \rangle, \dots , \langle \gamma_d, [\sigma] \rangle),\]
then any lift of $\sigma$, $\tilde \sigma : [t_0,t_1] \to \tilde M_\gamma$ satisfies
\[\tilde \sigma (t_1) = n \cdot \tilde \sigma (t_0),\]
where $\cdot$ denotes the Deck group action of $\Z^d$.
\end{lemma}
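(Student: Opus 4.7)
The plan is to combine the classical classification of covers by monodromy with Poincaré duality on the closed oriented surface $M$. First, I would recall that by standard covering space theory, a $\Z^d$-cover $p: \tilde M \to M$ on which $\Z^d$ acts freely by deck transformations is determined up to isomorphism by its monodromy homomorphism $\mu: \pi_1(M, x_0) \to \Z^d$: having fixed a lift $\tilde x_0$ of the basepoint, $\mu([\sigma])$ is the unique $n \in \Z^d$ such that the lift of $\sigma$ starting at $\tilde x_0$ ends at $n \cdot \tilde x_0$. Conversely, any homomorphism $\pi_1(M) \to \Z^d$ arises as the monodromy of such a cover, which one can build explicitly as $(\widetilde M_{\mathrm{univ}} \times \Z^d) / \pi_1(M)$ with $\pi_1$ acting diagonally. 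This gives a bijection between isomorphism classes of $\Z^d$-covers and $\Hom(\pi_1(M), \Z^d)$, and the transformation property claimed for lifts is precisely the defining property of monodromy.

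Next, since $\Z^d$ is abelian, every such homomorphism factors through the abelianisation $H_1(M, \Z)$, so
\[\Hom(\pi_1(M), \Z^d) \;=\; \Hom(H_1(M, \Z), \Z^d) \;=\; \Hom(H_1(M, \Z), \Z)^d.\]
Applying Poincaré duality on the closed oriented surface $M$ then identifies $\Hom(H_1(M, \Z), \Z) \cong H^1(M, \Z) \cong H_1(M, \Z)$, where the composite isomorphism sends $\gamma \in H_1(M, \Z)$ to the functional $[\sigma] \mapsto \langle \gamma, [\sigma]\rangle$; unimodularity of the intersection form on a closed oriented surface is what ensures this is a bijection on integer classes. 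Chaining these identifications, a tuple $(\gamma_1, \dots, \gamma_d) \in H_1(M, \Z)^d$ corresponds to the cover whose monodromy is $\mu([\sigma]) = (\langle \gamma_1, [\sigma]\rangle, \dots, \langle \gamma_d, [\sigma]\rangle)$, and the lift property then follows directly from the definition of monodromy.

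Since every step is a direct consequence of classical covering space theory and Poincaré duality on a closed surface, no serious obstacle arises. The lemma is essentially a concrete repackaging of these two ingredients, and the main task in writing it up is simply to verify that the explicit bijection $\gamma \mapsto \langle \gamma, \cdot \rangle$ between $H_1(M, \Z)$ and $\Hom(H_1(M, \Z), \Z)$ interacts cleanly with the rest of the chain of identifications so that the monodromy really is given by the intersection pairing with the $\gamma_i$.
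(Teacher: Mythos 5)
The paper does not prove this lemma, citing only \cite{HooperWeiss} and Section~2 of \cite{FU}, and your argument is the standard one (monodromy classification of regular $\Z^d$-covers, factorization through abelianization because $\Z^d$ is abelian, then Poincar\'e duality, i.e.\ unimodularity of the intersection form on the closed oriented surface, to pass from $\Hom(H_1(M,\Z),\Z)$ back to $H_1(M,\Z)$ via $\gamma\mapsto\langle\gamma,\cdot\rangle$); it is correct and essentially the intended proof. The one point worth making explicit in a write-up is that the lifting identity $\tilde\sigma(t_1)=n\cdot\tilde\sigma(t_0)$ is asserted for an arbitrary lift of an arbitrary closed curve, not only for the lift of a based loop starting at the chosen basepoint lift $\tilde x_0$; this is fine because the cover is regular with abelian Deck group, so the monodromy element attached to a free homotopy class is independent of both the basepoint and the choice of lift.
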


Given a compact translation surface $(M,\omega)$ and a $\Z^d$-cover $\tilde M_\gamma$ of $M$, the pullback $\tilde \omega_\gamma = p^*\omega$ determines a translation structure on $\tilde M$. We say that $(\tilde M_\gamma, \tilde \omega_\gamma)$ is a \emph{$\Z^d$-cover} of the translation surface $(M,\omega)$.

We consider the vertical flow $(\tilde\varphi_t)^v_{t\in \R}$ on $(\tilde M_\gamma, \tilde \omega_\gamma)$. 

As a consequence of \Cref{lem:covers}, the following lemma holds.
Let $\tilde I = p^{-1}(I) \cong I \times \Z^d$. Denote by $\tilde T: I \times \Z^d \to I \times \Z^d$ the Poincaré map of $(\tilde \varphi_t)^v_{t\in \R}$ to $\tilde I$. 
\begin{lemma}[See Lemma 2.1 in \cite{FU}]\label{lem:skew-product}
The map $\tilde T$ is given by a skew-product $\tilde T = T_{\phi_\gamma}: I \times \Z^d \to I \times \Z^d$, where
\[T_{\phi_\gamma}(x,a) = (T(x),a+\phi_\gamma(x)).\]
\end{lemma}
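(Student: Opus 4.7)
The plan is to reduce the identification of $\tilde T$ to an application of \Cref{lem:covers}, using as the test loop exactly the one that defines the homology class $\xi_\alpha$ attached to the continuity interval $I_\alpha$. Since $p:\tilde M_\gamma \to M$ is a local isometry of translation surfaces, the vertical vector field on $\tilde M_\gamma$ projects to the vertical vector field on $M$, and so $\tilde \varphi^v_t$ projects to $\varphi^v_t$ for all $t\in\R$. Taking $(x,a)\in I_\alpha \times \Z^d$, this implies that the first return time of $(x,a)$ to $\tilde I = p^{-1}(I)$ coincides with the first return time of $x$ to $I$ under $T$, and that $p$ of the return point equals $T(x)$. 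Therefore $\tilde T(x,a) = (T(x), b(x,a))$ for some $b(x,a) \in \Z^d$, and the whole task is to show $b(x,a) - a = \phi_\gamma(x)$.

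To pin down $b(x,a)-a$, consider the closed loop $\sigma$ on $M$ which first follows the vertical flow arc from $x$ to $T(x)$ and then closes up along the horizontal segment $[T(x),x] \subset I$; by construction $[\sigma]=\xi_\alpha$. I would lift $\sigma$ to $\tilde M_\gamma$ starting at $(x,a)$: the vertical portion lifts, as noted above, to the flow arc ending at $(T(x),b(x,a))$, while the horizontal portion is contained in $I$ and therefore, using that $I$ has no self-intersections so that $p^{-1}(I)$ is a disjoint union of copies of $I$ indexed by $\Z^d$, lifts within the single sheet $I \times \{b(x,a)\}$ to end at $(x,b(x,a))$. Applying \Cref{lem:covers} with this loop gives
\[(x,b(x,a)) = \bigl(\langle\gamma_1,\xi_\alpha\rangle,\dots,\langle\gamma_d,\xi_\alpha\rangle\bigr) \cdot (x,a) = (x, a+\phi_\gamma(x)),\]
where in the last equality I have used the definition $\phi_\gamma|_{I_\alpha} = (\langle \gamma_1,\xi_\alpha\rangle,\dots,\langle\gamma_d,\xi_\alpha\rangle)$. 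Comparing second coordinates yields the desired skew-product formula.

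There is no genuine obstacle here; the statement is a bookkeeping consequence of \Cref{lem:covers}. The only point that needs a sentence of justification is the claim that the horizontal closing segment lifts to a single sheet of $\tilde I$; this is where the assumption that $I$ is embedded in $M\setminus\Sigma$ is used, together with the fact that $\tilde I \to I$ is a trivial $\Z^d$-cover. Once this is in place, matching endpoints via the Deck action finishes the argument.
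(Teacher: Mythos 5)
Your proposal is correct and takes the same route the paper indicates: it derives the skew-product identity as a direct application of \Cref{lem:covers} to the lift of the loop representing $\xi_\alpha$. The paper itself gives no proof, stating the lemma only ``as a consequence of'' \Cref{lem:covers} and citing \cite{FU}, so your argument simply fills in those omitted details.
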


\subsection{The wind-tree model}\label{sec:wind-tree}
In this section we define the wind-tree model and discuss the splitting of its homology, following \cite{DHL}.

Fix parameters $a,b \in (0,1)$. The wind-tree model with parameters $a,b$ is defined as
\[W(a,b) = \Z^2 \setminus \bigcup_{(k,l) \in \Z^2} \left(\left[k-\frac{a}2, k+\frac{a}2\right] \times \Big[l-\frac{b}2, l+\frac{b}2\Big]\right).\]
Unfolding the billiard in $W(a,b)$, one gets a translation surface $X_\infty = X_\infty(a,b)$, which is a $\Z^2$-cover as described below.\\

Let $Y = Y(a,b)$ be a genus two translation surface defined by gluing the opposite edges of the square with a rectangular hole
$\left(\big[-\frac{1}2,\frac{1}2\big]\times \big[-\frac{1}2,\frac{1}2\big]\right)\, \big\backslash \,  \left(\big[-\frac{a}2,\frac{a}2\big]\times \big[-\frac{b}2,\frac{b}2\big]\right)$.

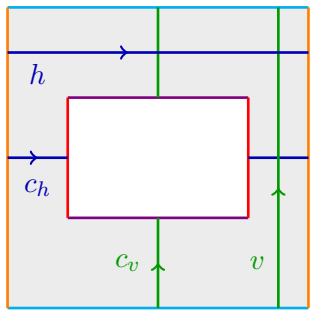
\begin{figure}[H]
\centering
\begin{tikzpicture}[scale=4,line width=1]
\fill[gray!15] (0,0) -- (1,0) -- (1,1) -- (0,1) -- (0,0) -- (0.2,0.3) -- (0.2,0.7) -- (0.8,0.7) -- (0.8,0.3) -- (0.2,0.3) -- cycle;
\draw[cyan] (0,0) -- (1,0);
\draw[cyan] (0,1) -- (1,1);
\draw[violet] (0.2,0.3) -- (0.8,0.3);
\draw[violet] (0.2,0.7) -- (0.8,0.7);
\draw[orange] (0,0) -- (0,1);
\draw[orange] (1,0) -- (1,1);
\draw[red] (0.2,0.3) -- (0.2,0.7);
\draw[red] (0.8,0.3) -- (0.8,0.7);
\begin{scope}[decoration={markings, mark=at position 0.5 with {\arrow{>}}}]
\draw[green!60!black, postaction={decorate}] (0.5,0) -- (0.5,0.3);
\draw[green!60!black] (0.5,0.7) -- (0.5,1);
\node[green!60!black] at (0.4,0.15) {$c_v$}; 

\draw[blue!70!black, postaction={decorate}] (0,0.5) -- (0.2,0.5);
\draw[blue!70!black,] (0.8,0.5) -- (1,0.5);
\node[blue!70!black] at (0.1,0.4) {$c_h$}; 
\end{scope}

\begin{scope}[decoration={markings, mark=at position 0.4 with {\arrow{>}}}]
\draw[green!60!black, postaction={decorate}] (0.9,0) -- (0.9,1);
\node[green!60!black] at (0.83,0.15) {$v$}; 

\draw[blue!70!black, postaction={decorate}] (0,0.85) -- (1,0.85);
\node[blue!70!black] at (0.1,0.78) {$h$}; 
\end{scope}

\end{tikzpicture}
\caption{The surface $Y$ with the gluings and curves $v,h,c_h$ and $c_v$.}
\label{fig:Y}
\end{figure}

Let $h,v,c_h$ and $c_v$ be the four curves on $Y$ as in \Cref{fig:Y}. Then let $X = X(a,b)$ be the $\Z^2 \times \Z^2$ cover of $Y$ corresponding to the homology classes $([c_h]-[h], [c_v]-[v]) \in H_1(Y,\Z/2\Z)^2$.
In \Cref{fig:X} we show $X$ with its four singularities.

\begin{figure}[H]
\centering
\begin{tikzpicture}[scale=3.5,line width=1]
\begin{scope}[decoration={markings, mark=at position 0.6 with {\arrow{>}}}]

\fill[gray!15] (0,0) -- (1,0) -- (1,1) -- (0,1) -- (0,0) -- (0.2,0.3) -- (0.2,0.7) -- (0.8,0.7) -- (0.8,0.3) -- (0.2,0.3) -- cycle;
\draw[cyan, postaction={decorate}] (0,0) -- (1,0);
\draw[cyan, postaction={decorate}] (0,1) -- (1,1);
\draw (0.2,0.3) -- (0.8,0.3);
\draw (0.2,0.7) -- (0.8,0.7);
\draw[orange, postaction={decorate}] (0,0) -- (0,1);
\draw[orange, postaction={decorate}] (1,0) -- (1,1);
\draw (0.2,0.3) -- (0.2,0.7);
\draw (0.8,0.3) -- (0.8,0.7);

\node[circle,fill,inner sep=0.9mm,red] at (0.2,0.3) {};
\node[rectangle,fill,inner sep=1mm,violet] at (0.8,0.3) {};
\node[diamond,fill,inner sep=0.8mm,green!60!black] at (0.2,0.7) {};
\node[star,fill,inner sep=0.8mm] at (0.8,0.7) {};

\node[cyan] at (0.6,-0.1) {$h_{00}$};
\node[cyan] at (0.6,0.9) {$h_{00}$};
\node[orange] at (-0.1,0.15) {$v_{00}$};
\node[orange] at (0.9,0.15) {$v_{00}$};

\draw[green!60!black, postaction={decorate}] (0.4,0) -- (0.4,0.3);
\draw[green!60!black] (0.4,0.7) -- (0.4,1);
\node[green!60!black] at (0.25,0.15) {$c_{v,0}$}; 

\draw[blue!70!black, postaction={decorate}] (0,0.5) -- (0.2,0.5);
\draw[blue!70!black,] (0.8,0.5) -- (1,0.5);
\node[blue!70!black] at (0.1,0.4) {$c_{h,0}$}; 

\begin{scope}[shift={(1.3,0)}]
\fill[gray!15] (0,0) -- (1,0) -- (1,1) -- (0,1) -- (0,0) -- (0.2,0.3) -- (0.2,0.7) -- (0.8,0.7) -- (0.8,0.3) -- (0.2,0.3) -- cycle;
\draw[cyan, postaction={decorate}] (0,0) -- (1,0);
\draw[cyan, postaction={decorate}] (0,1) -- (1,1);
\draw (0.2,0.3) -- (0.8,0.3);
\draw (0.2,0.7) -- (0.8,0.7);
\draw[orange, postaction={decorate}] (0,0) -- (0,1);
\draw[orange, postaction={decorate}] (1,0) -- (1,1);
\draw (0.2,0.3) -- (0.2,0.7);
\draw (0.8,0.3) -- (0.8,0.7);

\node[circle,fill,inner sep=0.9mm,red] at (0.8,0.3) {};
\node[rectangle,fill,inner sep=1mm,violet] at (0.2,0.3) {};
\node[diamond,fill,inner sep=0.8mm,green!60!black] at (0.8,0.7) {};
\node[star,fill,inner sep=0.8mm] at (0.2,0.7) {};

\node[cyan] at (0.6,-0.1) {$h_{10}$};
\node[cyan] at (0.6,0.9) {$h_{10}$};
\node[orange] at (0.1,0.15) {$v_{10}$};
\node[orange] at (1.1,0.15) {$v_{10}$};

\draw[green!60!black, postaction={decorate}] (0.4,0) -- (0.4,0.3);
\draw[green!60!black] (0.4,0.7) -- (0.4,1);
\node[green!60!black] at (0.55,0.15) {$c_{v,1}$}; 

\draw[blue!70!black,] (0,0.5) -- (0.2,0.5);
\draw[blue!70!black,] (0.8,0.5) -- (1,0.5);
\end{scope}

\begin{scope}[shift={(1.3,1.3)}]
\fill[gray!15] (0,0) -- (1,0) -- (1,1) -- (0,1) -- (0,0) -- (0.2,0.3) -- (0.2,0.7) -- (0.8,0.7) -- (0.8,0.3) -- (0.2,0.3) -- cycle;
\draw[cyan, postaction={decorate}] (0,0) -- (1,0);
\draw[cyan, postaction={decorate}] (0,1) -- (1,1);
\draw (0.2,0.3) -- (0.8,0.3);
\draw (0.2,0.7) -- (0.8,0.7);
\draw[orange, postaction={decorate}] (0,0) -- (0,1);
\draw[orange, postaction={decorate}] (1,0) -- (1,1);
\draw (0.2,0.3) -- (0.2,0.7);
\draw (0.8,0.3) -- (0.8,0.7);

\node[circle,fill,inner sep=0.9mm,red] at (0.8,0.7) {};
\node[rectangle,fill,inner sep=1mm,violet] at (0.2,0.7) {};
\node[diamond,fill,inner sep=0.8mm,green!60!black] at (0.8,0.3) {};
\node[star,fill,inner sep=0.8mm] at (0.2,0.3) {};

\node[cyan] at (0.6,0.1) {$h_{11}$};
\node[cyan] at (0.6,1.1) {$h_{11}$};
\node[orange] at (0.1,0.15) {$v_{11}$};
\node[orange] at (1.1,0.15) {$v_{11}$};

\draw[green!60!black] (0.4,0) -- (0.4,0.3);
\draw[green!60!black] (0.4,0.7) -- (0.4,1);

\draw[blue!70!black,] (0,0.5) -- (0.2,0.5);
\draw[blue!70!black,] (0.8,0.5) -- (1,0.5);
\end{scope}

\begin{scope}[shift={(0,1.3)}]
\fill[gray!15] (0,0) -- (1,0) -- (1,1) -- (0,1) -- (0,0) -- (0.2,0.3) -- (0.2,0.7) -- (0.8,0.7) -- (0.8,0.3) -- (0.2,0.3) -- cycle;
\draw[cyan, postaction={decorate}] (0,0) -- (1,0);
\draw[cyan, postaction={decorate}] (0,1) -- (1,1);
\draw (0.2,0.3) -- (0.8,0.3);
\draw (0.2,0.7) -- (0.8,0.7);
\draw[orange, postaction={decorate}] (0,0) -- (0,1);
\draw[orange, postaction={decorate}] (1,0) -- (1,1);
\draw (0.2,0.3) -- (0.2,0.7);
\draw (0.8,0.3) -- (0.8,0.7);

\node[circle,fill,inner sep=0.9mm,red] at (0.2,0.7) {};
\node[rectangle,fill,inner sep=1mm,violet] at (0.8,0.7) {};
\node[diamond,fill,inner sep=0.8mm,green!60!black] at (0.2,0.3) {};
\node[star,fill,inner sep=0.8mm] at (0.8,0.3) {};

\node[cyan] at (0.6,0.1) {$h_{01}$};
\node[cyan] at (0.6,1.1) {$h_{01}$};
\node[orange] at (-0.1,0.15) {$v_{01}$};
\node[orange] at (0.9,0.15) {$v_{01}$};

\draw[green!60!black] (0.4,0) -- (0.4,0.3);
\draw[green!60!black] (0.4,0.7) -- (0.4,1);

\draw[blue!70!black, postaction={decorate}] (0,0.5) -- (0.2,0.5);
\draw[blue!70!black,] (0.8,0.5) -- (1,0.5);
\node[blue!70!black] at (0.1,0.4) {$c_{h,1}$}; 
\end{scope}

\end{scope}

\end{tikzpicture}
\caption{The surface $X$ with its four singularities and the classes generating $H_1(X,\Z)$.}
\label{fig:X}
\end{figure}
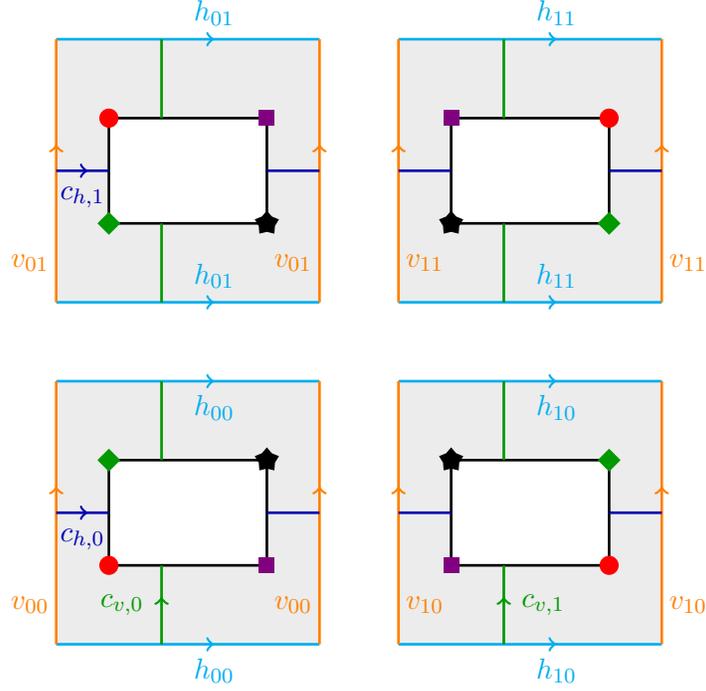

Define the homology classes $h_{xy}, v_{xy} \in H_1(X,\Z)$ for $x,y\in \{0,1\}$ as in \Cref{fig:X}.
If we define 
\begin{align*}
\gamma_h &:= -v_{00}+v_{10}-v_{01}+v_{11},\\
\gamma_v &:= h_{00}+h_{10}-h_{01}-h_{11},
\end{align*}
then $X_\infty$ is the $\Z^2$-cover arising from the classes $(\gamma_h,\gamma_v)$.\\

One computes that $X$ has genus 5, and so $H_1(X,\Z) \cong \Z^{10}$. The twelve homology classes $h_{xy},v_{xy}, c_{h,y}, c_{v,x}$ for $x,y \in \{0,1\}$ generate $H_1(X,\Z)$ with the following two relations:
\begin{align*}
c_{h,0} - c_{h,1} &= h_{00} + h_{10} - h_{01} - h_{11}\\
c_{v,0} - c_{v,1} &= v_{00} - v_{10} + v_{01} - v_{11}.
\end{align*}

Now consider the Deck group action of the Klein group $\Z/2\Z \times \Z/2\Z$ on $X$, seen as a cover of $Y$. If we let $\tau_h$ and $\tau_v$ be the generators of the two $\Z/2\Z$ factors, they act on $X$ by permuting the four sheets. In particular, the induced action on homology is given by:
\begin{align*}
(\tau_h)_* h_{xy} &= h_{(x+1)\, y}\\
(\tau_h)_* v_{xy} &= v_{(x+1)\, y}\\
(\tau_h)_* c_{h,y} &= c_{h,y}\\
(\tau_h)_* c_{v,x} &= c_{v,x+1},
\end{align*}
and symmetrically for $\tau_v$.

Since the action of $K$ commutes with the $SL(2,\Z)$ action on $\mathcal{M}_1(X)$, $H_1(X,\R)$ splits into four invariant blocks, 
\[H_1(X,\R) = E^{++} \oplus E^{+-} \oplus E^{-+} \oplus E^{--},\]
where
\[E^{st} = \{\xi \in H_1(X,\R): (\tau_h)_*\xi = s\cdot \xi, (\tau_v)_*\xi = t\cdot \xi\}\, \text{ for }\, s,t\in \{+,-\}.\]

These blocks have the following generators:
\begin{align*}
E^{++} &= \R (h_{00}+h_{10}+h_{01}+h_{00}) \oplus \R (v_{00}+v_{10}+v_{01}+v_{00}) \oplus \R (c_{h,0}+c_{h,1}) \oplus \R (c_{v,0}+c_{v,1}) \\ 
E^{+-} &= \R (\gamma_v) \oplus \R (v_{00}+v_{10}-v_{01}-v_{11})\\
E^{-+} &= \R (h_{00}-h_{10}+h_{01}-h_{11}) \oplus \R (\gamma_h)\\
E^{--} &= \R (h_{00}-h_{10}-h_{01}+h_{11}) \oplus \R (v_{00}-v_{10}-v_{01}+v_{11})
\end{align*}

Each of these four blocks is symplectic, and the Lyapunov exponents for the Kontsevich-Zorich cocycle over the corresponding invariant subbundle are respectively $\{1, \lambda^{++}, -\lambda^{++}, -1\}$, $\{\lambda^{+-}, -\lambda^{+-}\}$, $\{\lambda^{-+}, -\lambda^{-+}\}$, $\{\lambda^{--}, -\lambda^{--}\}$. 

\begin{thm}[See Corollary 1 in \cite{DHL}]\label{thm:LyapExp}
For any $a,b\in (0,1)$, for almost all $\theta \in S^1$, the following holds.
 
Let $\mathcal{N}$ be the $SL(2,\R)$ orbit closure of $r_{\pi/2-\theta}X(a,b)$ and $\nu_\mathcal{N}$ the corresponding invariant probability measure. 
Then the Lyapunov exponents for the Kontsevich-Zorich cocycle with respect to $\nu_\mathcal{N}$ are given by $\lambda^{++} = \lambda^{--} = 1/3$ and $\lambda^{+-} = \lambda^{-+} = 2/3$.
\end{thm}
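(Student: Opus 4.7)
The plan is to apply the Eskin--Kontsevich--Zorich (EKZ) formula for sums of non-negative Lyapunov exponents on each of the four $K$-isotypic subbundles, following the approach of Delecroix--Hubert--Lelièvre. The four-fold cover $X(a,b)\to Y(a,b)$ factors through three intermediate $\Z/2\Z$-covers of $Y$, namely $X/\langle\tau_h\rangle$, $X/\langle\tau_v\rangle$ and $X/\langle\tau_h\tau_v\rangle$. Pulling back cohomology identifies $H^1(Y,\R)$ with $E^{++}$, and identifies the cohomology of each intermediate cover with $E^{++}\oplus E^{st}$ for the appropriate sign pattern $(s,t)\in\{(+,-),(-,+),(-,-)\}$. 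On $E^{++}$ the Hodge bundle carries the tautological exponent $1$ together with a second non-negative exponent $\lambda^{++}$, while each of the $2$-dimensional blocks $E^{+-}$, $E^{-+}$, $E^{--}$ contributes a single non-negative exponent.

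First I would pin down the stratum of $Y$ (one checks from \Cref{fig:Y} that $Y\in\mathcal H(1,1)$) together with the ramification data of the three intermediate $\Z/2\Z$-covers, hence the strata in which they live. From this I would read off, for each of the four surfaces $Y, X/\langle\tau_h\rangle, X/\langle\tau_v\rangle, X/\langle\tau_h\tau_v\rangle$, the EKZ ``stratum term''
\[\frac{1}{12}\sum_i \frac{m_i(m_i+2)}{m_i+1}.\]

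Second I would compute the Siegel--Veech constants of each of these four surfaces with respect to the invariant measure $\nu_\mathcal N$. Since all surfaces in question are built from the same square-tiled family, the horizontal cylinders of $Y$ lift to cylinders of the intermediate covers in one of two predictable ways depending on whether the core curve of the cylinder meets the branch set of the relevant involution: either to a single cylinder of the same area, or to two cylinders whose total area equals that of the original. Summing these contributions produces the required Siegel--Veech constants.

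Combining the two ingredients, EKZ applied to $Y$ gives $1+\lambda^{++}=1+\tfrac{1}{3}$, consistent with the known spectrum of $\mathcal H(1,1)$, and EKZ applied to the three intermediate $\Z/2\Z$-covers gives three further linear equations. Subtracting the $E^{++}$ contribution $1+\lambda^{++}$ from each, one isolates $\lambda^{+-},\lambda^{-+},\lambda^{--}$, and a solution of the resulting linear system gives the stated values $\lambda^{++}=\lambda^{--}=\tfrac13$ and $\lambda^{+-}=\lambda^{-+}=\tfrac23$. The main obstacle, and the real content of [DHL, Cor.~1], is the Siegel--Veech bookkeeping on the three intermediate covers: one must track carefully which cylinders of $Y$ double under each involution and which remain single, a calculation that depends on the explicit combinatorics of the cover defining the wind-tree model.
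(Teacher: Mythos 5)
The paper does not prove this statement; it cites it directly as Corollary~1 of \cite{DHL}, so there is no internal proof to compare against. Your outline is therefore being measured against the argument in \cite{DHL}, and its overall structure (apply Eskin--Kontsevich--Zorich to $Y$ and to the three intermediate $\Z/2$-covers, identify $H^1$ of each intermediate cover with $E^{++}\oplus E^{st}$, then subtract) is reasonable and broadly in the spirit of that paper.

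There is, however, a concrete error that would derail the computation: you assert that $Y\in\mathcal{H}(1,1)$, but in fact $Y\in\mathcal{H}(2)$. When opposite sides of the interior rectangle are glued by translation, all four corners of the hole are identified to a single point of cone angle $4\cdot\tfrac{3\pi}{2}=6\pi$, i.e.\ a single zero of order $2$. (Sanity check: the unramified $\Z/2\times\Z/2$ cover $X$ then has four zeros of order $2$, so $X\in\mathcal{H}(2,2,2,2)$ of genus~$5$, exactly as the paper states and as \Cref{fig:X} depicts with four singularity markers.) Your own consistency check should have flagged this: you conclude $\lambda^{++}=1/3$ and call this ``consistent with the known spectrum of $\mathcal{H}(1,1)$,'' but the second Lyapunov exponent for $\mathcal{H}(1,1)$ is $1/2$; it is $\mathcal{H}(2)$ whose second exponent is $1/3$. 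Getting the stratum right changes the EKZ ``stratum term'' for $Y$ and, via Riemann--Hurwitz, the strata of the intermediate covers (each unramified double cover of $Y$ lies in $\mathcal{H}(2,2)$, genus~$3$, not in the strata you would get from $\mathcal{H}(1,1)$), so the downstream bookkeeping would come out wrong.

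A second gap worth noting: the statement holds for \emph{all} $a,b\in(0,1)$, not just for parameters where $r_{\pi/2-\theta}X(a,b)$ equidistributes in the largest possible affine invariant submanifold. For non-generic $(a,b)$ the orbit closure $\mathcal{N}$ may be a proper affine submanifold, and the sum of exponents given by EKZ depends a priori on $\nu_\mathcal{N}$. Part of the content of \cite{DHL} is that the relevant quotients land in loci (e.g.\ $\mathcal{H}(2)$ and certain hyperelliptic loci) where the exponent in question is the same for every $SL(2,\R)$-invariant measure; your outline treats the Siegel--Veech constants as if they were determined once and for all, without addressing this uniformity.
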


\section{Construction of invariant sets and non-transitivity}\label{sec:invariant}

\begin{thm}\label{thm:invariant}
Suppose that $(M,\omega)$ is either BOM generic or $g_t$-periodic. Suppose that $H_1(M,\Z) = K \oplus K^\perp$ is an orthogonal splitting with respect to the symplectic form, and that $V = K \otimes \R$ defines an $SL_2(\R)$-invariant subbundle $\mathcal{V}$ of the homology Hodge 
bundle of $(M,\omega)$. 
Let $2d = \dim K$, and let $d_+$ be the number of positive Lyapunov exponents of $\mathcal{V}$. Assume that $d_+ > 0$. Let $m=2d-d_+$ be the number of non-positive Lyapunov exponents, and let $\gamma_1,\dots,\gamma_m \in K$ be any linearly independent classes.
Let $(\tilde M_\gamma,\tilde \omega_\gamma)$ be the $\Z^m$-cover corresponding to $\gamma = (\gamma_1,\dots,\gamma_m)$.

Then there exists a lattice $\Lambda \subset \R^{d_+}$ and a non-constant continuous function $\hat h:\tilde M_\gamma \to \R^{d_+}/\Lambda$, which is invariant for the vertical flow $\tilde \varphi^v_t$ on $(\tilde M_\gamma,\tilde \omega_\gamma)$.
\end{thm}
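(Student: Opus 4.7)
My plan is to build $\hat h$ explicitly, by combining the continuous transfer functions provided by \Cref{thm:coboundary} for a basis of the stable space $E^-_\omega$ with a linear functional on the $\Z^m$-fiber, in such a way that the ``jumps'' of this expression across the first-return map land in a prescribed discrete subgroup $\Lambda\subset\R^{d_+}$.

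First I would apply \Cref{thm:coboundary} to fix a horizontal interval $I$ whose first-return map $T$ is a minimal IET of Roth type and for which each $\Phi(\alpha)$ with $\alpha\in E^-_\omega$ is a coboundary with continuous transfer function $h_\alpha:I\to\R$. By \Cref{lem:skew-product}, the first-return of the vertical flow on $\tilde I = I\times\Z^m$ is the skew product $\tilde T(x,a) = (T(x),\,a+\phi_\gamma(x))$. Since $V$ is symplectic with $d_+$ positive Lyapunov exponents, symplectic duality gives $\dim E^-_\omega = d_+$, and I fix a basis $\alpha_1,\dots,\alpha_{d_+}$ of $E^-_\omega$ with its continuous transfer functions $h_{\alpha_j}$.

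Next, I extend the linearly independent integer classes $\gamma_1,\dots,\gamma_m$ to an $\R$-basis of $V$ by choosing a rational complement of $W:=\mathrm{span}_\R\{\gamma_i\}$ and picking integer vectors $\delta_1,\dots,\delta_{d_+}\in K$ in it, and I decompose
$$\alpha_j \;=\; \sum_{i=1}^m p_{ji}\,\gamma_i + \sum_{k=1}^{d_+} q_{jk}\,\delta_k, \qquad p_{ji},q_{jk}\in\R.$$
Writing $P=(p_{ji})$, $Q=(q_{jk})$, $\Lambda:=Q\Z^{d_+}\subset\R^{d_+}$, I define $\hat h:\tilde I\to\R^{d_+}$ component-wise by
$$\hat h_j(x,a) \;:=\; h_{\alpha_j}(x)-\sum_{i=1}^m p_{ji}\,a_i,$$
extended to $\tilde M_\gamma$ by being constant along vertical flow orbits. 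A direct computation using linearity of $\Phi$ and the coboundary identity for $h_{\alpha_j}$ gives
$$\hat h(\tilde T(x,a))-\hat h(x,a) \;=\; \bigl(\phi_{\alpha_j-\sum_i p_{ji}\gamma_i}(x)\bigr)_j \;=\; Q\,\phi_\delta(x)\;\in\;Q\Z^{d_+}=\Lambda,$$
where the inclusion uses that $\delta_k\in K$ forces $\phi_{\delta_k}(x)\in\Z$. Continuity of each $h_{\alpha_j}$ on $I$ combined with this cancellation modulo $\Lambda$ across the return then yields a continuous, vertical-flow-invariant function $\hat h\bmod\Lambda:\tilde M_\gamma\to\R^{d_+}/\Lambda$.

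For non-constancy I argue by contradiction: if $\hat h\bmod\Lambda$ were constant, fixing the fiber coordinate would force the continuous vector $H(x):=(h_{\alpha_j}(x))_j$ to take values in a single coset $H(x_0)+\Lambda$; discreteness of $\Lambda$ in $\R^{d_+}$ and connectedness of $I$ then make $H$ constant, so every $\phi_{\alpha_j}$ vanishes, and injectivity of $\Phi$ on $H_1(M,\R)$ --- which follows from the fact that the return loops $\{\xi_\beta\}$ generate $H_1(M,\R)$ and the symplectic form is non-degenerate --- forces $\alpha_j=0$, contradicting that the $\alpha_j$'s form a basis. The hard part is to verify that $\Lambda=Q\Z^{d_+}$ is genuinely discrete (i.e.\ a lattice in the intended sense): this is automatic when $E^-_\omega\cap W=0$, in which case $Q$ is invertible and $\Lambda$ is a full-rank lattice. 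In the wind-tree application this transversality is guaranteed in each Kontsevich--Zorich block by the unstable-generator hypothesis, so discreteness holds there; for the general statement the same construction still goes through once one adapts the basis $\{\alpha_j\}$ to the splitting $E^-_\omega=(E^-_\omega\cap W)\oplus E'$ and verifies that the resulting $\Lambda$ remains discrete in $\R^{d_+}$.
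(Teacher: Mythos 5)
Your construction is the paper's construction, step for step: fix the transversal $I$ from \Cref{thm:coboundary}, take a basis $\{\alpha_j\}$ of the $d_+$-dimensional stable part of $V$, express each $\alpha_j$ in a basis of $K$ extending $\{\gamma_i\}$ by integer classes, define the $\R^{d_+}$-valued $\tilde h$ by subtracting the $\gamma$-part of the fibre coordinate from the transfer functions, check that the increment across the return map is $Q\,\phi_\delta(x)\in\Lambda$, and pass to the quotient. Your non-constancy argument is also the paper's, with the useful added detail that $h_{\alpha_j}\equiv\mathrm{const}$ forces $\Phi(\alpha_j)=0$ and hence $\alpha_j=0$ because the return classes $\xi_\beta$ span $H_1(M,\R)$ and the intersection form is non-degenerate --- the paper asserts the implication without this justification, and your version is slightly more careful on this point (as well as in not requiring $\{\gamma_i,\delta_k\}$ to be a $\Z$-basis, only that the $\delta_k$ be integral).

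The point you single out as ``the hard part'' --- discreteness of $\Lambda=Q\,\Z^{d_+}$ --- is a genuine subtlety that the paper does not address at all: it simply writes that $C\cdot\Z^{d_+}$ ``is a lattice in $\R^{d_+}$''. You are right that this needs $E^-_\omega\cap\mathrm{span}_\R(\gamma_1,\dots,\gamma_m)=0$ (equivalently $Q$ invertible), and right that the unstable hypothesis secures it in the wind-tree application, where $K$ is a two-dimensional block with $\gamma_h\in E^+_\omega$. However, your sketched repair for the general case --- adapting the basis to $E^-_\omega=(E^-_\omega\cap W)\oplus E'$ --- does not by itself give discreteness: zeroing out the rows of $Q$ coming from $E^-_\omega\cap W$ still leaves the remaining rows, and a single row with irrational entry-ratio already generates a dense subgroup of the corresponding coordinate line. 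So without a transversality hypothesis on the $\gamma_i$ (e.g.\ that they lie in $E^0_\omega\oplus E^+_\omega$), neither your write-up nor the paper's actually justifies that $\Lambda$ is a lattice; this does not affect \Cref{thm:main}, where the hypothesis holds automatically, but it is a gap you correctly noticed in \Cref{thm:invariant} as stated.
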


\begin{proof}
Let $I$ be a horizontal Poincaré section for the vertical flow on $(M,\omega)$ satisfying the conclusion of \Cref{thm:coboundary}, and let $T:I\to I$ be the Poincaré map. Let $\phi:I \to \Z^m$ be the piecewise constant function given by $\phi = (\Phi(\gamma_1),\dots,\Phi(\gamma_m)).$ 
By \Cref{lem:skew-product} the vertical flow on $(\tilde M_\gamma, \tilde \omega_\gamma)$ is a special flow over the skew-product $T_\phi$, hence to construct an invariant function for the vertical flow we construct an invariant function for $T_\phi$ on $I \times \Z^m$ and then extend it to $\tilde M_\gamma$ along vertical leaves.\\

As the numbers of positive and negative exponents are the same by the symplectic structure of the cocycle, the space of stable vectors in $V$, $V \cap E^-_\omega$ is $d_+$-dimensional. Let $\alpha_1,\dots,\alpha_{d_+}$ be a basis for this stable subspace of $V$.

Extend the set $\{\gamma_1,\dots,\gamma_m\}$ to a basis $\{\gamma_1,\dots,\gamma_m,\sigma_1,\dots,\sigma_{d_+}\}$ of $K$.  

Express the $\alpha_i$ in terms of this basis as
\[\alpha_i = \sum_{j=1}^m b_{ij} \gamma_j + \sum_{k=1}^{d_+} c_{ik} \sigma_k,\]
with $b_{ij}, c_{ik} \in \R$.

Since the $\alpha_i$ are stable, by \Cref{thm:coboundary} the functions $\psi_i = \Phi(\alpha_i)$ are coboundaries, i.e. there exist continuous 
$h_1,\dots,h_{d_+}: I \to \R$ such that 
\[h_i(Tx)-h_i(x) = \psi_i(x) = \sum_{j=1}^m b_{ij} \phi_j(x) + \sum_{k=1}^{d_+} c_{ik} \Phi(\sigma_k)(x).\]

Now define the function $\tilde h: I\times Z^m \to \R^{d_+}$, where for $a=(a_1,\dots,a_m) \in \Z^m$,
\[\big[\tilde h(x,a)\big]_i = h_i(x)-\sum_{j=1}^m b_{ij} a_j\ \ \ \text{for}\ i=1,\dots,d_+.\]

Then observe that
\begin{align*}
\big[\tilde h(T_\phi(x,a))\big]_i &= h_i(Tx) - \sum_{j=1}^m b_{ij} (a_j+\phi_j(x))\\
&= h_i(x) + \sum_{j=1}^m b_{ij} \phi_j(x) + \sum_{k=1}^{d_+} c_{ik} \Phi(\sigma_k)(x) - \sum_{j=1}^m b_{ij} a_j - \sum_{j=1}^m b_{ij} \phi_j(x)\\
&= \big[\tilde h(x,a)\big]_i + \sum_{k=1}^{d_+} c_{ik} \Phi(\sigma_k)(x).
\end{align*}

Now note that since $\sigma\in H_1(M,\Z)$, $\Phi(\sigma_k)(x) \in \Z$ for all $k,x$.

Hence if $C \in M_{d_+\times d_+}$ is the matrix with entries $c_{ik}$, we see that for all $(x,a)\in I\times \Z^m$, 
\[\tilde h(T_\phi(x,a)) - \tilde h(x,a) = \bigg(\sum_{k=1}^{d_+} c_{1k} \Phi(\sigma_k)(x), \dots , \sum_{k=1}^{d_+} c_{d_+ k} \Phi(\sigma_k)(x)\bigg) = 
C \cdot \begin{pmatrix} \Phi(\sigma_1)(x) \\ \vdots \\ \Phi(\sigma_{d_+})(x) \end{pmatrix}  \in C \cdot \Z^{d_+}.\]

Let $\Lambda = C \cdot \Z^{d_+}$, which is a lattice in $\R^{d_+}$. Then if we define $\hat h : I \times Z^m \to \R^{d_+}/\Lambda$ by composing $\tilde h$ with 
the quotient map, $\hat h$ is a continuous function and it is $T_\phi$-invariant.\\

Finally, let us show that $\hat h$ can not be constant on $I\times \Z^m$. Indeed, suppose $\hat h$ is constant. Then since $\tilde h$ is continuous, also $\tilde h$ must be constant. By the definition of $\tilde h$, considering a fixed $a\in \Z^m$, we see that every $h_i$ must be constant on $I$. But this implies that $\alpha_i = 0$, which contradicts our assumption. Thus we conclude that $\hat h$ is a non-constant, continuous, $T_\phi$-invariant function on $I\times \Z^m$. 

If we extend $\hat h$ to $\tilde M_\gamma$, by defining, for every $\tilde x \in I$, the value on the vertical segment joining $\tilde x$ and $T_\phi (\tilde x)$ to be equal to the value at $\tilde x$, we obtain a non-constant continuous function on $\tilde M_\gamma$ that is invariant under the vertical flow.
\end{proof}

\begin{cor}
Under the assumptions of \Cref{thm:invariant}, the vertical flow on $(\tilde M_\gamma,\tilde\omega_\gamma)$ is non-transitive.
\end{cor}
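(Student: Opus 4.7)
The plan is to derive the corollary as a direct consequence of \Cref{thm:invariant} by exploiting the tension between continuity, invariance, and density. Concretely, I would argue by contradiction: assume that the vertical flow $\tilde\varphi^v_t$ on $(\tilde M_\gamma,\tilde\omega_\gamma)$ has a dense orbit, and show that the invariant function $\hat h$ produced by \Cref{thm:invariant} must then be constant, which is not the case.

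First, I would invoke \Cref{thm:invariant} to obtain the non-constant continuous function $\hat h: \tilde M_\gamma \to \R^{d_+}/\Lambda$ that is invariant under $\tilde\varphi^v_t$. The invariance means that for any point $\tilde x \in \tilde M_\gamma$ and any $t\in\R$, $\hat h(\tilde\varphi^v_t(\tilde x)) = \hat h(\tilde x)$, so $\hat h$ is constant along each orbit of the flow.

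Next, suppose for contradiction there exists $\tilde x_0 \in \tilde M_\gamma$ whose vertical orbit $\mathcal{O}(\tilde x_0) := \{\tilde\varphi^v_t(\tilde x_0): t\in \R\}$ is dense in $\tilde M_\gamma$. Then $\hat h$ is identically equal to the constant value $c_0 := \hat h(\tilde x_0)$ on $\mathcal{O}(\tilde x_0)$. Since $\hat h$ is continuous and $\mathcal{O}(\tilde x_0)$ is dense, it follows that $\hat h \equiv c_0$ on all of $\tilde M_\gamma$, contradicting the non-constancy asserted in \Cref{thm:invariant}.

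There is no real obstacle here: the entire content is packed into \Cref{thm:invariant}, and this corollary is simply the standard observation that a continuous non-constant invariant function obstructs transitivity. The only thing to double-check is that $\hat h$ is genuinely continuous on $\tilde M_\gamma$ (not merely on $I\times \Z^m$) after the extension along vertical leaves, which is part of the statement of \Cref{thm:invariant}; once that is in hand, the argument above closes out the proof.
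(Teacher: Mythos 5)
Your proof is correct and takes essentially the same approach as the paper: a non-constant continuous invariant function obstructs a dense orbit, since the function would have to be constant on the orbit closure, which by density would be all of $\tilde M_\gamma$. The paper phrases this contrapositively at the level of the Poincaré section (orbit closures of $T_\phi$ sit inside proper closed level sets) and then lifts the conclusion to the flow, whereas you argue by contradiction directly on the flow; the content is identical.
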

\begin{proof}
Observe that by \Cref{thm:invariant}, for any $(x,a) \in I \times \Z^m$,
\[\overline{\{T_\phi^n(x,a): n \in \Z\}} \subset \overline{\hat h^{-1}(\hat h(x,a))}.\]
As $\hat h$ is continuous, in fact the level set $\hat h^{-1}(\hat h(x,a))$ is closed.
Since $\hat h$ is not constant, for every $(x,a)$ this level set is not all of $I \times \Z^m$, and so no orbit of $T_\phi$ is dense. Hence also no orbit of $\tilde \varphi^v_t$ is dense on $(\tilde M_\gamma,\tilde \omega_\gamma)$.
\end{proof}

\section{Hausdorff dimension in the periodic case}\label{sec:hdim}
Here we consider the case where the IET $T$ is of periodic type.
Let $N$ be such that $\mathcal{R}^N (T) = T$, and the corresponding matrix $A = A^{(0,N)}$ is positive. Denote by $T_k$ the induced IET $\hat {\mathcal{R}}^{Nk}(T)$, defined on an interval that we call $J^{(k)}$. Denote by $x_0$ the left endpoint of $I$, which is also the left end-point of $J^{(k)}$.

Denote by $Z_j := Z_j^{(0,N)}$ the Rokhlin tower for $T$ over $J_j^{(1)}$, let $q_j$ be its height.
By the periodicity, $Z_j$ is naturally identified with the Rokhlin tower for $T_{k-1}$ over $J^{(k)}_j$ for any $k$.\\

Suppose that $\psi$ is an eigenvector of $A^T$ with eigenvalue $\lambda < 1$.
We denote by $\psi$ also the piecewise constant function $\psi(x) = \psi_\alpha$ for $x\in I_\alpha$. Let $h$ be the continuous transfer function satisfying $\psi(x) = h(T(x)) - h(x)$, we assume without loss of generality that $h(x_0) = 0$.

\begin{lemma}\label{lem:induced_h}
For $x\in J_j^{(1)}$, $h(T_1(x)) - h(x) = \lambda \psi_j$.
\end{lemma}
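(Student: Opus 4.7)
The plan is to unfold the induced map $T_1$ as a high iterate of $T$, telescope the coboundary relation to turn $h(T_1 x) - h(x)$ into a Birkhoff sum of $\psi$ under $T$, and then apply \Cref{lem:cocycle} to express this Birkhoff sum via the Rauzy-Veech cocycle, at which point the eigenvector hypothesis immediately yields the claim.

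More concretely, first note that by the setup $J_j^{(1)} = I_j^{(N)}$ and $q_j = q_j^{(0,N)}$, so for $x\in J_j^{(1)}$ the definition of the induced map gives $T_1(x) = T^{q_j}(x)$. Using $\psi = h\circ T - h$ and telescoping,
\[
h(T_1(x)) - h(x) \;=\; h(T^{q_j}(x)) - h(x) \;=\; \sum_{l=0}^{q_j-1}\bigl(h(T^{l+1}(x)) - h(T^l(x))\bigr) \;=\; S^T_{q_j}\psi(x).
\]

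Next, since $\psi$ is constant on each $I_\alpha$, \Cref{lem:cocycle} applies with $k=N$: for $x \in I_j^{(N)} = J_j^{(1)}$ we have $S^T_{q_j}\psi(x) = \psi^{(N)}(x)$, where $\psi^{(N)} = (A^{(0,N)})^T\psi = A^T\psi$, viewed as a piecewise constant function on $I^{(N)} = J^{(1)}$ with value $(A^T\psi)_j$ on $J^{(1)}_j$. The assumption that $\psi$ is an eigenvector of $A^T$ with eigenvalue $\lambda$ gives $(A^T\psi)_j = \lambda\psi_j$, so $\psi^{(N)}(x) = \lambda\psi_j$ on $J_j^{(1)}$, and combining with the telescoping identity above,
\[
h(T_1(x)) - h(x) \;=\; \lambda\psi_j,
\]
which is exactly the claim.

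There is no serious obstacle here: the argument is purely bookkeeping, combining the coboundary equation with the cocycle identity already recorded in \Cref{lem:cocycle}. The only point requiring a moment of care is the identification $J^{(1)} = I^{(N)}$ and $q_j = q_j^{(0,N)}$, so that the correct power of $A$ appears and the eigenvector relation can be invoked.
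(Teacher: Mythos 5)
Your proof is correct and follows essentially the same route as the paper's: telescope the coboundary relation $\psi = h\circ T - h$ over $q_j$ steps to get $S^T_{q_j}\psi(x)$, then apply \Cref{lem:cocycle} with $k=N$ and the eigenvector hypothesis to conclude. The extra sentence identifying $J^{(1)}_j = I^{(N)}_j$ and $q_j = q_j^{(0,N)}$ is a reasonable bit of bookkeeping the paper leaves implicit.
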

\begin{proof}
Observe that $T_1(x) = T^{q_j}(x)$, hence
\[h(T_1(x)) - h(x) = \sum_{i=0}^{q_j-1} h(T^{i+1}(x)) - h(T^i(x)) = S^T_{q_j} \psi (x) = \psi_j^{(N)} = A^T \psi_j = \lambda \psi_j,\]
where in the third equality we used \Cref{lem:cocycle}.
\end{proof}

\subsection{The Vershik adic coding for IETs}
Here we discuss the Vershik coding for IETs, introduced in \cite{Bufetov} and \cite{LT}.
We give only the brief definitions necessary for the following, for a more detailed explanation and proofs see the above references or \cite{Tum}.

The idea is to code a point $x \in I$ by the floor of the tower $Z_{j_k}^{(0,Nk)}$ in which it appears. Taking larger $k$ gives a finer partition of $I$, and recording the floor for all $k\in \N$ allows one to identify $x$ uniquely.\\

Let $\mathcal{E}$ be the set of all floors of the towers $\{Z_j: j \in \mathcal{A}\}$, where we denote the floor $T^\ell(J^{(1)}_j)$ of $Z_j$ by $(j,\ell)$:
\[\mathcal{E} = \{(j,\ell): j\in \mathcal{A}, 0 \leq \ell < q_j\}.\]

The set $\mathcal{E}$ is the set of edges in a graph called the \emph{Bratteli diagram}, hence by convention we will sometimes call the elements of $\mathcal{E}$ \emph{edges}, and talk about their \emph{start} $s(e)$ and \emph{terminus} $t(e)$.

For an edge  $e=(j,l)$ we define $t(e) = j$ and $s(e)=i$, where $i\in\mathcal{A}$ is such that $I_i \supset T^\ell(J^{(1)}_j)$. Importantly, by periodicity, also for any $k \geq 1$, $J^{(k-1)}_i \supset T_{k-1}^\ell(J^{(k)}_j)$.

Denote by $\Sigma_k \subset \mathcal{E}^k$ the \emph{admissible paths} of length $k$, i.e. sequences $(e_1,\dots,e_k)$ such that $t(e_1) = s(e_2), \dots, t(e_{k-1}) = s(e_k)$.

Given a path $p_k = ((j_1,\ell_1),\dots,(j_k,\ell_k)) \in \Sigma_k$, we define a corresponding subinterval of $I$, by
\[\mathcal{J}(p_k) = T^{l_1} \circ T_1^{l_2} \circ \dots \circ T_{k-1}^{l_k} \Big(J^{(k)}_{j_k}\Big).\]

\newpage
\begin{lemma}[See Lemma 4.7 and Lemma 4.8 in \cite{Tum}]\hspace{1cm} 
\begin{enumerate} 
\item
The map $\mathcal{J}$ is a bijection between the set $\Sigma_k$ of admissible paths of length $k$ and floors of the towers for $T$ over $J^{(k)}$.
\item If $p_{k+1} \in \Sigma_{k+1}$ and $p_k$ is the path consisting of the first $k$ entries of $p_{k+1}$, then $\mathcal{J}(p_{k+1}) \subset \mathcal{J}(p_k)$.
\end{enumerate}

Hence if $\Sigma$ denotes the space of \emph{infinite admissible paths},
\[\Sigma := \{(e_1,e_2,\dots): \forall k \geq 1, t(e_k) = s(e_{k+1})\},\]
and denoting by $p_k\in \Sigma_k$ the first $k$ entries of $p\in \Sigma$, one can define the map $\mathcal{J}: \Sigma \to I$ by
\[\mathcal{J}(p) := \cap_{k \geq 1} \mathcal{J}(p_k).\]

The map $\mathcal{J}$ is surjective, it is 2-to-1 on countably many sequences in $\Sigma$ and one-to-one everywhere else.
\end{lemma}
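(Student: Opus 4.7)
The plan is to establish the three claims in order: the finite-path bijection, the nesting, and finally the properties of $\mathcal{J}$ on infinite paths, with the work concentrated in the first and third parts.

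Part (1) I would prove by induction on $k$. The base case $k=1$ is tautological, as $\Sigma_1 = \mathcal{E}$ is by definition the index set for floors $(j,\ell) \leftrightarrow T^\ell(J^{(1)}_j)$ of towers over $J^{(1)}$. For the inductive step, take a floor $F = T^m(J^{(k+1)}_{j_{k+1}})$ of a tower for $T$ over $J^{(k+1)}$. The key observation is that within the $T$-orbit of $J^{(k+1)}_{j_{k+1}}$ until it first returns to $J^{(k+1)}$, the visits to $J^{(k)}$ are exactly at times $t_0=0<t_1<\cdots$, and the $i$-th visit is $T_k^i(J^{(k+1)}_{j_{k+1}})$. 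Hence there exist unique $\ell_{k+1}$ and $r$ with $m = t_{\ell_{k+1}} + r$ and $0\le r < t_{\ell_{k+1}+1}-t_{\ell_{k+1}}$. By periodicity $T_k^{\ell_{k+1}}(J^{(k+1)}_{j_{k+1}})$ lies in a unique $J^{(k)}_{j_k}$, and $T^r(J^{(k)}_{j_k})$ is a floor of a tower for $T$ over $J^{(k)}$. The induction hypothesis associates to this floor a unique admissible path $p_k=((j_1,\ell_1),\dots,(j_k,\ell_k))$, and appending $(j_{k+1},\ell_{k+1})$ produces an admissible path in $\Sigma_{k+1}$ because the admissibility condition $t(e_k)=s(e_{k+1})$ is exactly the equation $j_k = $ (label of the $J^{(k)}$-interval containing $T_k^{\ell_{k+1}}(J^{(k+1)}_{j_{k+1}}))$. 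Uniqueness of the decomposition of $m$ combined with the inductive uniqueness yields injectivity, so $\mathcal{J}$ is a bijection at level $k+1$.

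Part (2) follows immediately: admissibility of $p_{k+1}$ gives $T_k^{\ell_{k+1}}(J^{(k+1)}_{j_{k+1}}) \subset J^{(k)}_{j_k}$, and applying the isometry $T^{\ell_1}\circ T_1^{\ell_2}\circ\cdots\circ T_{k-1}^{\ell_k}$ to both sides preserves the inclusion.

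For the infinite-path statement, the main input is that $T$ satisfies Keane's condition (automatic for Keane IETs of periodic type), which forces $|J^{(k)}|\to 0$. Since every floor $\mathcal{J}(p_k)$ is an isometric copy of a subinterval of $J^{(k)}$, its length is bounded by $|J^{(k)}|$, so by (2) the family $\{\overline{\mathcal{J}(p_k)}\}_{k\ge 1}$ is a nested sequence of compact intervals with diameters tending to $0$. Its intersection is a single point, and I define $\mathcal{J}(p)$ to be this point. Surjectivity is immediate: any $x \in I$ lies in a unique floor at each level $k$, and these floors form an admissible path by (1). For the 2-to-1 behaviour, the floors $\mathcal{J}(p_k) = [a_k,b_k)$ are half-open. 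Two distinct infinite paths $p\ne p'$ can produce the same point $x$ only if from some level on the half-open intervals share the common limit point as a boundary: one sequence has $a_k = x$ for large $k$ (so $x$ is realised as a left endpoint) while the other has $b_k \downarrow x$ (so $x$ is realised as a right-limit). Such an $x$ must be an endpoint of some $\mathcal{J}(p_k)$, i.e. the $T$-image of a discontinuity of $T_{k-1}$; the union of these over all $k$ is a countable set, and on each such point exactly two symbolic representations exist.

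The main obstacle is pinning down the 2-to-1 set correctly: one must verify that apart from the countably many orbit points of discontinuities, no two distinct admissible sequences can collapse to the same limit. This follows by showing that if $p$ and $p'$ agree up to level $k-1$ but differ at level $k$, then $\mathcal{J}(p_k)$ and $\mathcal{J}(p_k')$ are disjoint half-open intervals, so a common limit point must simultaneously be the right endpoint of one and the left endpoint (a true element) of the other, forcing it into the countable set described above. The rest of the argument is bookkeeping with the Rauzy--Veech hierarchy.
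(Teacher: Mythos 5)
The paper does not prove this lemma itself: it cites Lemma 4.7 and Lemma 4.8 of \cite{Tum}, so there is no in-paper argument to compare against, and your proof has to be judged on its own terms.

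Parts (1) and (2) of your argument are correct. The induction peels off the last edge of a path by the unique decomposition $m = t_{\ell_{k+1}} + r$ using the hitting times of $J^{(k)}$ by the $T$-orbit of $J^{(k+1)}_{j_{k+1}}$; the admissibility condition $t(e_k) = s(e_{k+1})$ is exactly the identity of the interval $J^{(k)}_{j_k}$ containing $T_k^{\ell_{k+1}}(J^{(k+1)}_{j_{k+1}})$ (using the periodicity to translate the definition of $s(e)$, which is phrased at the first level, to level $k$); and injectivity comes from uniqueness of the decomposition plus the inductive hypothesis. Part (2) is then the nested inclusion pushed through the common isometry $T^{\ell_1}\circ T_1^{\ell_2}\circ\cdots\circ T_{k-1}^{\ell_k}$.

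The infinite-path discussion is essentially sound, with two remarks. First, you are right to flag that the literal intersection $\bigcap_k \mathcal{J}(p_k)$ of half-open floors can be empty (when the right endpoint stabilizes and the left endpoints increase strictly to it); replacing each $\mathcal{J}(p_k)$ by its closure and using the geometric contraction of $|J^{(k)}|$ (immediate from periodicity with positive matrix $A$, no need to invoke Keane in this section) is the correct reading of the definition. Second, your final sentence slightly overclaims: not every point in the $T$-orbit of discontinuities has \emph{exactly} two preimages under $\mathcal{J}$ (the left endpoint of $I$, for instance, is reached by a single path, since there is no interval to its left). What your argument actually establishes is that the non-injectivity locus is \emph{contained in} that countable orbit set and that each point has at most two preimages, which is all the lemma asserts and is correct as stated.
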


\subsection{Computing the transition function $h$ using the coding}\label{sec:h_formula}
Define the function $f: \mathcal{E} \to \R$ to be the partial Birkhoff sum of $\psi$ from the base of the tower $Z_j$ until the floor corresponding to the edge $(j,\ell)$:
\[f((j,\ell)) := S^T_\ell \psi |_{I_j}.\]

\begin{prop}\label{prop:h_formula}
If a point $x$ has coding $x = \mathcal{J}(p)$ for $p=(e_1,e_2,\dots) \in \Sigma$, the value of $h$ at $x$ is given by the series
\[h(x) = \sum_{i=1}^{\infty} \lambda^{i-1} f(e_{i}).\]
\end{prop}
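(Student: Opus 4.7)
The plan is a telescoping argument along the coding. Given $x=\mathcal J(p)$ with $p=(e_1,e_2,\dots)$ and $e_i=(j_i,\ell_i)$, let $y_k\in J^{(k)}_{j_k}$ be the unique point satisfying
\[x = T^{\ell_1}\circ T_1^{\ell_2}\circ\cdots\circ T_{k-1}^{\ell_k}(y_k),\]
and set $y_i:=T_i^{\ell_{i+1}}\circ\cdots\circ T_{k-1}^{\ell_k}(y_k)$ for $0\le i<k$, so that $y_0=x$ and $y_{i-1}=T_{i-1}^{\ell_i}(y_i)$. Since $|J^{(k)}|\to 0$ and every $J^{(k)}$ shares the left endpoint $x_0$, the sequence $y_k$ converges to $x_0$, and continuity of $h$ together with $h(x_0)=0$ gives $h(y_k)\to 0$. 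Writing $h(x)=h(y_k)+\sum_{i=1}^k\bigl(h(y_{i-1})-h(y_i)\bigr)$ and letting $k\to\infty$, it will suffice to establish the per-level identity $h(y_{i-1})-h(y_i)=\lambda^{i-1}f(e_i)$.

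The main technical step is to generalise \Cref{lem:induced_h} to every level: for every $k\ge 0$ and every $z\in J^{(k)}_j$,
\[h(T_k(z))-h(z)=\lambda^k\psi_j.\]
The argument is identical in spirit to \Cref{lem:induced_h}. Since $T_k(z)=T^{q^{(0,Nk)}_j}(z)$, the difference telescopes to $S^T_{q^{(0,Nk)}_j}\psi(z)$, which by \Cref{lem:cocycle} equals $\big((A^{(0,Nk)})^T\psi\big)_j$. Periodicity of Rauzy-Veech forces $A^{(0,Nk)}=A^k$, and iterating the eigenvalue relation $A^T\psi=\lambda\psi$ yields $(A^T)^k\psi=\lambda^k\psi$.

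With this generalisation in hand, I would expand each telescope term as
\[h(y_{i-1})-h(y_i)=\sum_{m=0}^{\ell_i-1}\bigl[h(T_{i-1}^{m+1}y_i)-h(T_{i-1}^m y_i)\bigr].\]
For each $m$ the point $T_{i-1}^m(y_i)$ lies in $J^{(i-1)}_{r_m}$ where $r_m=s((j_i,m))$, thanks to the self-similar tower structure under periodic Rauzy-Veech (made explicit in the Vershik coding setup by the identity $J^{(k-1)}_{s(e)}\supset T_{k-1}^\ell(J^{(k)}_j)$ for $e=(j,\ell)$). The generalisation above identifies the $m$-th summand with $\lambda^{i-1}\psi_{r_m}$, and summing gives $\lambda^{i-1}\sum_{m=0}^{\ell_i-1}\psi_{s((j_i,m))}=\lambda^{i-1}f(e_i)$ by the definition of $f$. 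Summing over $i$ and passing to the limit then yields the claimed series representation, with absolute convergence guaranteed by $|\lambda|<1$ and the boundedness of $f$ on the finite set $\mathcal{E}$.

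The main obstacle is really just the combinatorial bookkeeping at the last step: verifying that the source indices $r_m$ arising from the $T_{i-1}$-action above $J^{(i)}_{j_i}$ coincide with those appearing at the base level in the definition of $f(e_i)$. This is exactly the content of the self-similarity induced by $\mathcal{R}^N(T)=T$ and is built into the construction of the source map, so it reduces to unpacking definitions rather than any deeper analytic difficulty.
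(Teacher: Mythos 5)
Your argument is correct and follows essentially the same telescoping strategy as the paper: you telescope $h$ along the coding hierarchy, reduce each increment to $\lambda^{i-1}f(e_i)$ via a level-$k$ version of \Cref{lem:induced_h}, and use continuity of $h$ together with $J^{(k)}\to\{x_0\}$ to kill the boundary term. The only (harmless) differences are cosmetic: you telescope directly from $x$ by introducing the consistently defined preimages $y_i$ rather than from the left endpoints $x_k$ of $\mathcal{J}(p_k)$ as the paper does, and you explicitly state and prove the generalisation $h(T_k(z))-h(z)=\lambda^k\psi_j$ which the paper invokes implicitly when it writes ``applying \Cref{lem:induced_h}.''
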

\begin{proof}
Consider the finite truncations $p_k = (e_1,\dots,e_k)$ for $k \geq 1$. Let $x_k$ be the left endpoint of the interval in $I$ corresponding to $p_k$. Since $\mathcal{J}(p_k)$ is a floor of the tower above $J^{(k)}_{j_k}$, the point $x_k$ is in the orbit of the left endpoint $y^{(k)}_{j_k}$ of $J^{(k)}_{j_k}$,
\[x_k = T^{\ell_1} \circ T_1^{\ell_2} \circ \dots \circ T_{k-1}^{\ell_k}(y^{(k)}_{j_k}).\]

Denote by $x_{k,r}$ the point $x_{k,r} = T_{r}^{\ell_{r+1}} \circ T_{r+1}^{\ell_{r+2}} \circ \dots \circ T_{k-1}^{\ell_k}(y^{(k)}_{j_k})$, so that $x_{k,0} = x_k$, and 
$x_{k,k} = y^{(k)}_{j_k}$. As $x_{k,i} = T_{i}^{\ell_{i+1}}(x_{k,i+1})$, applying \Cref{lem:induced_h}, we see that
\[h(x_{k,i}) - h(x_{k,i+1}) = S^{T}_{\ell_{i+1}} \lambda^i\psi |_{I_{j_{i+1}}} = \lambda^i f(e_{i+1}).\]

Hence
\[h(x_k) = h(x_{k,k})+\sum_{i=0}^{k-1} \big(h(x_{k,i}) - h(x_{k,i+1})\big) = h(x_{k,k})+ \sum_{i=1}^{k} \lambda^{i-1} f(e_i).\]

Now observe that $\lim_{k \to \infty} x_{k,k} = x_0$, and so $\lim_{k \to \infty} h(x_{k,k}) = 0$. Similarly, as $\lim_{k\to \infty} x_k = x$, 
\[h(x) = \lim_{k\to \infty} h(x_k) = \sum_{i=1}^{\infty} \lambda^{i-1} f(e_{i}).\]
\end{proof}

\subsection{Bounding the Hausdorff dimension}
\begin{lemma}\label{lem:different_sums}
By taking a multiple of the period $N$ if needed, we can assume that for any $i,j$, there exist two edges $e_1,e_2\in \mathcal{E}$ with $s(e_r) = i, t(e_r) = j$ for $r=1,2$, and $f(e_1) \neq f(e_2)$.
\end{lemma}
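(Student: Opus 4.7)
The plan is to proceed in two stages. First I will show that at level $2$ (i.e., after passing to period $2N$) there exist two paths in $\Sigma_2$ with common endpoints but different $f_2$-values; then I will use positivity of $A$ to extend this to every pair of endpoints at level $4$.

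For the first stage I will argue by contradiction: suppose that for every $(i,j) \in \mathcal{A}^2$, all paths in $\Sigma_2$ from $i$ to $j$ yield the same $f_2$-value. This forces the analogous property at level $1$ --- otherwise, appending any single edge to the common terminus of two disagreeing level-$1$ edges (possible by positivity of $A$) would produce disagreeing level-$2$ paths --- so $f(e) = g(s(e),t(e))$ for some function $g : \mathcal{A}^2 \to \R$. The level-$2$ hypothesis then reads $g(i,k) + \lambda g(k,j) = g(i,k') + \lambda g(k',j)$ for every $i,j,k,k' \in \mathcal{A}$ (all intermediate vertices are admissible by positivity of $A$), and a short manipulation of this functional equation produces constants $c, b_1, \ldots, b_n \in \R$ such that $g(i,j) = c + b_i - \lambda b_j$; equivalently, $f(e) = c + b_{s(e)} - \lambda b_{t(e)}$.

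The decisive consequence is obtained by plugging this rigid formula into the series representation of the transfer function $h$ given by \Cref{prop:h_formula}. For $x$ with coding $(e_1, e_2, \ldots)$ and $t(e_r) = s(e_{r+1})$, the sum
\[h(x) = \sum_{r=1}^\infty \lambda^{r-1} f(e_r) = \frac{c}{1-\lambda} + \sum_{r=1}^\infty \lambda^{r-1}\bigl(b_{s(e_r)} - \lambda b_{t(e_r)}\bigr)\]
telescopes (using $t(e_r) = s(e_{r+1})$) to $h(x) = \frac{c}{1-\lambda} + b_{s(e_1)}$, so $h$ is piecewise constant on the partition $\{I_i\}_i$. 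Continuity of $h$ on the connected interval $I$ then forces $h$ to be constant, hence $\psi = h \circ T - h \equiv 0$, contradicting the fact that $\psi$ is a nonzero eigenvector of $A^T$ (its eigenvalue $\lambda \in (0,1)$ is nonzero).

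For the second stage, fix $p^{(1)}, p^{(2)} \in \Sigma_2$ from $i_0$ to $j_0$ with $f_2(p^{(1)}) \neq f_2(p^{(2)})$ as supplied by the first stage. Given any $(i,j) \in \mathcal{A}^2$, pick edges $e : i \to i_0$ and $e' : j_0 \to j$ in $\mathcal{E}$ (both exist by positivity of $A$); the concatenations $e \cdot p^{(r)} \cdot e' \in \Sigma_4$ have $f_4$-values $f(e) + \lambda f_2(p^{(r)}) + \lambda^3 f(e')$, which differ by $\lambda\bigl(f_2(p^{(1)}) - f_2(p^{(2)})\bigr) \neq 0$. Replacing $N$ by $4N$ converts these into two level-$1$ edges in $\mathcal{E}$ from $i$ to $j$ with different $f$-values, which is exactly the content of the lemma. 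The main obstacle is the first stage --- specifically, recognising that the level-$2$ rigidity collapses the infinite series for $h$ to a piecewise-constant function, which then conflicts with the continuity of $h$ supplied by \Cref{thm:MMY}. The second stage is essentially combinatorial bookkeeping riding on positivity of $A$.
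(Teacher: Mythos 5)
Your proof is correct, and it takes a genuinely different route from the paper's. The paper argues constructively and locally, letter by letter: for each $i$ it notes that $h$ is non-constant on $I_i$, locates two subintervals $U_{i,1},U_{i,2}\subset I_i$ with disjoint $h$-images, invokes minimality of $T$ to find an orbit segment $y_{i,2}=T^k(y_{i,1})$ connecting them (so $S^T_k\psi\neq 0$), and then chooses a sufficiently high multiple $N_i$ of the period so that every tower $Z^{(0,N_i)}_j$ contains a floor in $J_{i,1}$ with the floor $k$ levels above it in $J_{i,2}$; finally one takes $N=\max_i N_i$. You instead argue globally by contradiction: if at level $2$ all admissible paths with common endpoints had equal $f_2$-values, then $f(e)$ would depend only on $(s(e),t(e))$ and would satisfy a functional equation whose solutions are exactly $f(e)=c+b_{s(e)}-\lambda b_{t(e)}$; feeding this into the series of \Cref{prop:h_formula} telescopes $h$ to a piecewise-constant function of $s(e_1)$, contradicting continuity via $\psi\neq 0$; positivity of $A$ then propagates a single disagreeing pair $(i_0,j_0)$ to all $(i,j)$ at level $4$. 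Both proofs rest on the same pillars (continuity of $h$, $\psi\neq 0$, positivity of $A$, and the series formula for $h$), but yours avoids the paper's implicit step that $h$ constant on a single $I_i$ forces $h$ piecewise constant everywhere, deriving piecewise-constancy directly from the telescoping identity instead. As a side benefit, your version yields the explicit bound that the multiple $4N$ of the period always suffices, whereas the paper's $N_i$ depends on an orbit length $k$ supplied by minimality and is not explicitly controlled.
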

\begin{proof}
Fix a letter $i$. Note that $h$ can not be constant on $I_i$, since then it would be piecewise constant everywhere, and since it is continuous it would have to be constant on $I$, which is not possible as $\psi \neq 0$. Then there must exist two subintervals $U_{i,1}$ and $U_{i,2}$ of $I_i$, such that $h(U_{i,1}) \cap h(U_{i,2}) = \emptyset$. 

By minimality of $T$, there are points $y_{i,1}\in U_{i,1}$ and $y_{i,2}\in U_{i,2}$, such that $y_{i,2} = T^k(y_{i,1})$ for some $k$. Since by assumption $h(U_{i,1}) \cap h(U_{i,2}) = \emptyset$, it must hold that $S^T_k \psi (y_{i,1}) \neq 0$, since it is equal to the difference $h(y_{i,2})-h(y_{i,1})$. 

Let $J_{i,1}$ be the interval containing $y_{i,1}$ on which $T^k$ is continuous, and let $J_{i,2} = T^k(J_{i,1})$. Take $N_i$ to be a sufficiently high multiple of the period $N$ such that, for each $j$, in the tower $Z^{(0,N_i)}_j$ there is a floor which is a subinterval of $J_{i,1}$ and $k$ floors above it a floor that is a subinterval of $J_{i,2}$. Then these two floors correspond to two edges $e_1, e_2$ with 
start $i$ and end $j$, and $f(e_2)-f(e_1) = S_k \psi (y_{i,1}) \neq 0$.
Doing this for each $i$ and taking $N = \max_i N_i$ will give an $N$ that works for all $i$ and $j$.
\end{proof}

For fixed $i,j$, call the two edges $e_1,e_2$ as above an \emph{alternate pair} for $i,j$.
From now on we assume that $N$ is large enough to satisfy the conclusion of \Cref{lem:different_sums}. For each $i,j$, let $\delta_{ij}$ be the difference $|f(e_2)-f(e_1)| > 0$ for the alternate pair $e_1,e_2$. Let $\delta = \min_{i,j} \delta_{ij}$.

Let $F = \max_{(e,e')\in \mathcal{E}^2} |f(e)-f(e')|$ and let $b$ be an integer such that 
\[\frac{F \lambda^b}{1-\lambda}  <\delta.\]

\begin{lemma}\label{lem:gaps}
Let $(e_1,e'_1)$ be an alternate pair, and let $(e_1,\dots,e_b)$ be an admissible finite path. Then $h(\mathcal{J}(e_1,e_2,\dots,e_b)) \cap h(\mathcal{J}(e'_1,e_2,\dots,e_b)) = \emptyset$.
\end{lemma}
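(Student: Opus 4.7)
The plan is to invoke the explicit series formula of \Cref{prop:h_formula} applied to arbitrary points in each interval, and show directly that the differences $h(x)-h(y)$ are uniformly bounded away from zero.

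First I would verify that the path $(e_1',e_2,\dots,e_b)$ is admissible: since $(e_1,e_1')$ is an alternate pair for some $i,j$, one has $t(e_1') = j = t(e_1) = s(e_2)$, and admissibility at positions $2,\dots,b-1$ is inherited from the original path. Hence $\mathcal{J}(e_1',e_2,\dots,e_b)$ is a genuine (nonempty) floor. I would then pick arbitrary $x \in \mathcal{J}(e_1,e_2,\dots,e_b)$ and $y \in \mathcal{J}(e_1',e_2,\dots,e_b)$ and extend their finite codings to infinite admissible paths
\[
p = (e_1,e_2,\dots,e_b,e_{b+1},e_{b+2},\dots), \qquad p' = (e_1',e_2,\dots,e_b,e'_{b+1},e'_{b+2},\dots),
\]
with $\mathcal{J}(p)=x$ and $\mathcal{J}(p')=y$; such extensions exist by the surjectivity of $\mathcal{J}$.

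The key computation is to apply \Cref{prop:h_formula} to both codings. Since $p$ and $p'$ agree in positions $2,\dots,b$, those terms cancel exactly, leaving
\[
h(x) - h(y) = \bigl(f(e_1) - f(e_1')\bigr) + \sum_{i=b+1}^{\infty} \lambda^{i-1}\bigl(f(e_i) - f(e_i')\bigr).
\]
I would estimate the two pieces separately. The first term has absolute value at least $\delta_{ij} \geq \delta$ by definition of alternate pair. The tail is controlled by the geometric bound
\[
\left|\sum_{i=b+1}^{\infty} \lambda^{i-1}\bigl(f(e_i) - f(e_i')\bigr)\right| \leq F \sum_{i=b+1}^{\infty} \lambda^{i-1} = \frac{F\lambda^b}{1-\lambda} < \delta,
\]
by the defining choice of $b$. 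The triangle inequality then yields $|h(x)-h(y)| \geq \delta - \tfrac{F\lambda^b}{1-\lambda} > 0$, uniformly in $x$ and $y$, so the two images are disjoint.

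I do not anticipate any real obstacle, since this is essentially a telescoping argument that isolates the effect of the first edge of the coding and controls everything after depth $b$ by the geometric series. The one mild subtlety is that the map $\mathcal{J}$ is $2$-to-$1$ on a countable set, so a point may admit two admissible codings; this is harmless because \Cref{prop:h_formula} gives the value of $h$ correctly for \emph{any} valid coding, so we may pick whichever extension of the finite paths we like.
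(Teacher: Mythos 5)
Your proof is correct and takes essentially the same approach as the paper: both apply the series formula of \Cref{prop:h_formula}, isolate the first-edge difference $|f(e_1)-f(e_1')|\ge\delta$, and bound the tail past depth $b$ by $F\lambda^b/(1-\lambda)<\delta$. The only cosmetic difference is that the paper routes through an auxiliary path $q$ (equal to $p'$ with its first edge swapped) and uses a two-step triangle inequality, whereas you compute $h(x)-h(y)$ directly from the series after noting that positions $2,\dots,b$ cancel.
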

\begin{proof}
Observe that if $p=(\tilde e_1,\tilde e_2,\dots),q=(\hat e_1,\hat e_2,\dots)$ are two paths such that $\tilde e_i = \hat e_i$ for $i=1,\dots,b$, then 
\[|h(p)-h(q)| \leq F \sum_{i={b+1}}^\infty \lambda^{i-1} \leq \frac{F\lambda^b}{1-\lambda} < \delta.\] 
Now take any path $p\in \Sigma$ starting with the finite path $(e_1,e_2,\dots, e_b)$ and any path $p'\in \Sigma$ starting with $(e'_1,e_2,\dots,e_b)$.
Then if $q$ is the path obtained from $p'$ by replacing the first edge $e'_1$ with $e_1$, then $|h(q) - h(p')|  = |f(e_1)-f(e'_1)| > \delta$, whereas $|h(p) - h(q)| < \delta$. Thus $h(p) \neq h(p')$, and hence
$h(\mathcal{J}(e_1,\dots,e_b)) \cap h(\mathcal{J}(e'_1,e_2,\dots,e_b)) = \emptyset$.
\end{proof}

In particular, for any given value $z$, at least one of the two intervals $\mathcal{J}(e_1,\dots,e_b)$ and $\mathcal{J}(f_1,e_2,\dots,e_b)$ must be in the complement of $h^{-1}(z)$. 
This allows us to find gaps in the level set $h^{-1}(z)$.

\begin{thm}\label{thm:hdim}
Let $T, h$ as above. Then for any $z\in \R$, the Hausdorff dimension of the level set $h^{-1}(z)$ is strictly less than 1.
\end{thm}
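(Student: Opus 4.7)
The plan is to use the symbolic coding $\mathcal{J}:\Sigma\to I$ together with the series formula $h(\mathcal{J}(p))=\sum_{i\geq 1}\lambda^{i-1}f(e_i)$ from \Cref{prop:h_formula} to cover the level set $h^{-1}(z)$ by cylinders $\mathcal{J}(p_K)$ for paths $p_K\in\Sigma_K$ of length $K=kb$, and then to apply the disjointness estimate of \Cref{lem:gaps} ``block-by-block'' to show that only a geometrically shrinking fraction of such cylinders can meet $h^{-1}(z)$. The periodic type assumption forces the cylinders to have length $\asymp\rho^{-kb}$ while $|\Sigma_{kb}|\asymp\rho^{kb}$, where $\rho$ is the Perron--Frobenius eigenvalue of $A$; hence any geometric reduction in the count of surviving cylinders will convert directly into $\dim_H h^{-1}(z)<1$.

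For the elimination step, I would partition the positions $\{1,\dots,kb\}$ into $k$ consecutive blocks of length $b$ and, for each $j\in\{1,\dots,k\}$, consider swapping the first edge $e_{(j-1)b+1}$ of block $j$ with its alternate-pair partner while keeping every other edge of $p_{kb}$ fixed. Such a swap shifts the center of $h(\mathcal{J}(p_{kb}))$ by exactly $\lambda^{(j-1)b}|f(e_{(j-1)b+1})-f(e'_{(j-1)b+1})|\geq\lambda^{(j-1)b}\delta$, whereas each cylinder has diameter at most $\lambda^{kb}F/(1-\lambda)$. Since $kb-(j-1)b\geq b$, the choice of $b$ with $F\lambda^b/(1-\lambda)<\delta$ gives
\[\lambda^{kb}F/(1-\lambda)\leq \lambda^{(j-1)b}\cdot\lambda^b F/(1-\lambda)<\lambda^{(j-1)b}\delta,\]
so the two swapped cylinders have disjoint $h$-images and at most one contains $z$. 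This is exactly the argument of \Cref{lem:gaps} applied at an arbitrary block position, not just the first.

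For the counting step, I would condition on the tuple of ``non-first-of-block'' edges $(e_2,\dots,e_b,e_{b+2},\dots,e_{2b},\dots)$. For each $j\geq 2$, the admissible choices for $e_{(j-1)b+1}$ number $A_{t(e_{(j-1)b}),s(e_{(j-1)b+2})}$, of which alternate-pair elimination leaves at most $A_{t(e_{(j-1)b}),s(e_{(j-1)b+2})}-1$ surviving; for $j=1$, running the alternate-pair argument across every starting letter (as in \Cref{lem:different_sums}) removes at least one option per letter, giving a bound of $q_{s(e_2)}-|\mathcal{A}|$. Setting $\kappa:=\max(1-1/M,\,1-|\mathcal{A}|/Q)<1$ with $M=\max_{i,j}A_{ij}$ and $Q=\max_j q_j$, and summing the block-by-block product over admissible non-first tuples yields
\[|S_{kb}|\leq \kappa^k|\Sigma_{kb}|\leq C\kappa^k\rho^{kb}\]
for the surviving set $S_{kb}\subset\Sigma_{kb}$. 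The $s$-dimensional Hausdorff sum over this cover is then bounded by $C'\kappa^k\rho^{kb(1-s)}$, which tends to $0$ as $k\to\infty$ whenever $s>1+\log\kappa/(b\log\rho)$; since $\log\kappa<0$ and $\log\rho>0$, this threshold is strictly below $1$.

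The main technical obstacle is the block-wise elimination argument: one must verify that the disjointness-of-cylinders statement of \Cref{lem:gaps} genuinely propagates to every block position $j\leq k$ (which is precisely what forces the ``block of size $b$'' structure, so that the tail wiggle $\lambda^{kb}F/(1-\lambda)$ stays below the alternate-pair gap $\lambda^{(j-1)b}\delta$ uniformly in $j$), and then arrange the combinatorial counting so that the geometric factor $\kappa^k$ survives summation over the admissibility constraints linking consecutive blocks. Once these bookkeeping tasks are in place, the Perron--Frobenius growth/decay rates coming from the periodic type of $T$ immediately close the Hausdorff dimension computation.
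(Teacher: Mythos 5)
Your reduction to the key estimate --- the tail wiggle $F\lambda^{kb}/(1-\lambda)$ is dominated by the alternate-pair shift $\lambda^{(j-1)b}\delta$ for every $j\le k$, so that two cylinders differing by a single swap at block $j$ have disjoint $h$-images --- correctly captures the mechanism of \Cref{lem:gaps}, and the Perron--Frobenius heuristic linking an exponential reduction in cylinder count to $\dim_H<1$ is sound. The problem is the block-by-block counting, which does not produce the exponential factor $\kappa^k$ you need.

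You claim that, conditioning on the non-first-of-block edges, at most $n_j-1$ of the $n_j$ admissible choices at block $j$ can appear in a surviving tuple, and multiply to get $|S^{\mathrm{cond}}|\le\prod_j(n_j-1)$. But the swap argument only shows that, for a \emph{fixed} choice of the first-of-block edges at the other $k-1$ blocks, at most one member of the alternate pair at block $j$ survives. Which member survives changes with the other blocks, so across all of $S^{\mathrm{cond}}$ every choice at block $j$ can appear; your constraint merely forbids two surviving tuples that differ by a \emph{single} swap. This is an independence constraint in a product graph, whose independence number is a constant fraction of the total, not $\kappa^k N$: already for two blocks with $n_j=2$, the set $S=\{(e_1,e_2),(e'_1,e'_2)\}$ has no forbidden pair and size $N/2$ (and it is arithmetically realizable whenever $f(e_1)-f(e'_1)$ nearly matches $\lambda^b\bigl(f(e'_{b+1})-f(e_{b+1})\bigr)$, which you have not excluded), and in general the hypercube bound only gives $|S^{\mathrm{cond}}|\le N^{\mathrm{cond}}/2$. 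The paper avoids this by working hierarchically rather than as a product over blocks: it builds a Cantor set $C=\bigcap_k C_k$ by, at level $k$, removing one gap from each cylinder still contained in $C_k$, always at the \emph{last} block, so the removals compound along the nested tree of cylinders. The quantity tracked is the Lebesgue measure $|C_k|$, which drops by a uniform factor $1-m^{-1}\mu$ at each step because the removed gap is a fixed proportion of its parent cylinder; this length decay is then played off against the at most $cm^k$ components of $C_k$ via the power-mean inequality. Rescuing a counting version of your argument requires exactly this iterated, tree-nested elimination --- applying the last-block swap to each surviving length-$kb$ cylinder to kill one of its length-$(k+1)b$ children --- rather than a product over the $k$ blocks of a fixed-length word.
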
 
\begin{proof}
Fix some $z\in \R$. We will show that $h^{-1}(z)$ is contained in a Cantor set of Hausdorff dimension less than 1. As shown in 
\Cref{lem:gaps}, picking any alternate pair $(e_1,f_1)$ and admissible finite path $(e_1,\dots,e_b)$ gives us a gap in $h^{-1}(z)$, as either 
$\mathcal{J}(e_1,e_2,\dots,e_b)$ or $\mathcal{J}(f_1,e_2,\dots,e_b)$ must be in the complement of the level set. 

For any $k \geq 1$, recall that we can partition $I$ as $I = \sqcup_{p\in \Sigma_{kb}} \mathcal{J}(p)$, where $\Sigma_{kb}$ are the finite admissible paths of length $kb$. For any $p=(e_1,e_2,\dots,e_{kb}) \in \Sigma_{kb}$, there exists an alternate pair $(e_{kb+1},f_{kb+1})$ and an admissible path $(e_{kb+1},e_{kb+2},\dots,e_{(k+1)b})$. To this pair corresponds as above a gap in $h^{-1}(z)$, which we will denote by $G_p$. Thus there is a gap $G_p \subset I \setminus h^{-1}(z)$ for every element $p$ of the partition. The choice of alternate pair and admissible path that gives a gap is very far from unique, but we just need to find one. We will construct our Cantor set $C$ inductively as $C = \cap_{k \geq 1} C_k$, where $C_{k+1}$ is obtained from $C_{k}$ by removing the gap $G_p$ from every element of the partition $I = \sqcup_{p\in \Sigma_{kb}} \mathcal{J}(p)$, which is still contained in $C_k$.

More formally, for $k=0$, denote by $G_i$ a gap inside $I_i$, given by an alternate pair $(e_1,e'_1)$ with $s(e_1)=s(e'_1) = i$, and some admissible path $(e_1,\dots,e_b)$.
Let $C_1 = I\setminus \cup_{i\in \mathcal{A}} G_i$.
For $k \geq 1$, let $\mathcal{P}_k$ be the set of admissible paths $p = (e_1,\dots,e_{kb})$ such that $\mathcal{J}(p) \subset C_k$, and define
$C_{k+1} = C_k \setminus \cup_{p \in \mathcal{P}_k} G_p$. Let $C = \cap_{k \geq 1} C_k$. Then $h^{-1}(z) \subset C$.\\

Now let us show that the Hausdorff dimension of $C$ is strictly less than 1. First let us bound the number of gaps in $C_k$. Let $m = \max_{ij} A^b_{ij}$. 
Then the number of all admissible paths of length $kb$ is bounded above by $n m^k$, where $n=|\mathcal{A}|$. Hence $\#\mathcal{P}_k \leq n m^k$, and so the number of gaps in $C_k$ is bounded above by $n \sum_{r=0}^{k-1} m^r < cm^k$ for some uniform constant $c$. 

To bound the Hausdorff dimension of $C$ from above, we consider for each $k$ the cover of $C$ given by taking the set of intervals that are the connected components of $C_k$. Denote the lengths of these intervals by $\ell_{k,1},\dots,\ell_{k,N_k}$, where $N_k < cm^k$ is their number. Observe that the mesh of this cover goes to 0 as $k$ goes to infinity, since $\ell_{k,j} \leq 2 \max_{i\in \mathcal{A}} |J^{(k)}_i|$.

If $\mu = \frac{\min_{i\in \mathcal{A}} |I_i|}{\max_{i\in \mathcal{A}} |I_i|}$, then for any $p\in \mathcal{P}_{k-1}$, $|G_p| \geq m^{-1} \mu |\mathcal{J}(p)|$, which means that at every step the total length of gaps that we remove has proportion at least $m^{-1}\mu$ of the length of $C_k$ and hence by induction, for all $k$, 
\[\sum_{i=1}^{N_k} \ell_{k,i} = |C_k| \leq (1-m^{-1}\mu)^k.\]

Now take $\beta < 1$. Then by the power means inequality,
\[\left( \frac{\sum_{i=1}^{N_k} \ell_{k,i}^\beta}{N_k} \right)^{\frac{1}{\beta}} \leq \frac{\sum_{i=1}^{N_k} \ell_{k,i}}{N_k} = \frac{|C_k|}{N_k},\]
and hence
\[\sum_{i=1}^{N_k} \ell_{k,i}^\beta \leq |C_k|^{\beta} N_k^{1-\beta} \leq c \left((1-m^{-1}\mu)^\beta m^{1-\beta}\right)^k.\]

From this we deduce that if $\beta_0 < 1$ is such that $(1-m^{-1}\mu)^{\beta_0} m^{1-\beta_0} = 1$, then for $\beta > \beta_0$, the $\beta$-dimensional Hausdorff measure of $C$ is 0, and hence we deduce that the Hausdorff dimension of $C$ is at most $\beta_0$.
\end{proof}

\begin{cor}\label{cor:hdim}
Suppose that $(M,\omega)$, $\gamma$ satisfy the conditions of the statement of \Cref{thm:invariant}, with $\omega$ $g_t$-periodic. Then for every $\tilde x \in \tilde M_\gamma$, the closure of the orbit of $\tilde x$ under the vertical flow has Hausdorff dimension strictly less than 2.
\end{cor}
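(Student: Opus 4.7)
The plan is to combine the continuous invariant function $\hat h : \tilde M_\gamma \to \R^{d_+}/\Lambda$ produced by \Cref{thm:invariant} with the level set dimension estimate of \Cref{thm:hdim}. Since $\hat h$ is continuous and invariant under the vertical flow, for every $\tilde x \in \tilde M_\gamma$ the orbit closure is contained in the level set $\hat h^{-1}(\hat h(\tilde x))$, so it suffices to show that each such level set has Hausdorff dimension strictly less than $2$.

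First, I would exhibit a real eigenvector $\alpha$ of the Kontsevich-Zorich cocycle in $V \cap E^-_\omega$. In the $g_t$-periodic case, the one-period action of the cocycle on $V$ is conjugate, via $\Phi$, to the transpose of the Rauzy-Veech cocycle matrix $A = A^{(0,N)}$ acting on the space of piecewise constant functions determined by classes in $V$. Since $d_+ > 0$, the stable subspace is nonzero, and (after replacing $N$ by a multiple if necessary so that an eigenvalue in the stable part is real) it contains a real vector $\alpha$ with $A^T \Phi(\alpha) = \lambda\,\Phi(\alpha)$ for some $|\lambda| < 1$. Then \Cref{thm:hdim} applied to $\psi := \Phi(\alpha)$ provides the continuous transfer function $h_\alpha : I \to \R$ with $h_\alpha \circ T - h_\alpha = \psi$, together with a constant $\beta_0 < 1$ such that $\dim_H h_\alpha^{-1}(z) \leq \beta_0$ for every $z \in \R$ (inspection of the proof shows $\beta_0$ depends only on $T, N, \lambda$, not on $z$).

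Next, I would take $\alpha_1 = \alpha$ as the first stable basis vector in the construction of \Cref{thm:invariant}, so the first coordinate of the lift $\tilde h : I\times \Z^m \to \R^{d_+}$ is $\tilde h_1(x,a) = h_\alpha(x) - \sum_{j=1}^m b_{1j}\, a_j$. For any lift $y \in \R^{d_+}$ of $\hat h(\tilde x)$,
\[\hat h^{-1}(\hat h(\tilde x)) \cap \tilde I \;\subseteq\; \bigcup_{w \in \pi_1(y+\Lambda)} \tilde h_1^{-1}(w),\]
which is a countable union (indexed by the countable set $\pi_1(y+\Lambda) \subset \R$) of level sets of $\tilde h_1$. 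Restricted to each sheet $I \times \{a\}$ of $\tilde I$, each summand is a translate of a level set of $h_\alpha$, hence has Hausdorff dimension at most $\beta_0$. Since countable unions preserve this bound, the level set of $\hat h$ restricted to $\tilde I$ has $\dim_H \leq \beta_0 < 1$.

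Finally, to transfer the bound to $\tilde M_\gamma$, cover a compact fundamental domain for the $\Z^m$-Deck action by finitely many flow boxes of the form $U_i \times [0, T_i]$ in which the vertical flow is straight and $U_i \subset \tilde I$. Because $\hat h$ is constant along vertical leaves, the portion of the level set in each box has a product structure $(E_i \cap U_i) \times [0, T_i]$, where $E_i$ has Hausdorff dimension at most $\beta_0$; such a product has Hausdorff dimension at most $\beta_0 + 1 < 2$, with a uniform bound. A countable union over the Deck translates of the finitely many boxes preserves this bound globally. The main obstacle is the first step: guaranteeing a \emph{real} eigenvector of $A^T$ in the stable subspace, since \Cref{thm:hdim} is stated for such vectors. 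If all stable eigenvalues of the one-period cocycle are complex or belong to nontrivial Jordan blocks, one must either pass to a suitable multiple of the period or extend the proof of \Cref{thm:hdim} to cover those cases; a minor secondary point is the verification that the Hausdorff dimension bound on $\tilde I$ extends uniformly over the countably many flow boxes, which follows from the local product structure and boundedness of return times on a fundamental domain.
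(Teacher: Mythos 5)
Your proposal follows essentially the same route as the paper's proof: produce a real stable eigenvector $\alpha_1$ of the one-period cocycle, apply \Cref{thm:hdim} to the corresponding transfer function $h_1$ to bound its level sets by some $\beta_0 < 1$, contain the level set of $\hat h$ restricted to the section $\tilde I$ inside a union of translates of level sets of $h_1$, and finally extend the bound to $\tilde M_\gamma$ via the product structure along vertical leaves. The paper replaces your countable union over $\pi_1(y+\Lambda)$ by a finite union (using that the $h_i$ are bounded), but either is fine since Hausdorff dimension is countably stable, and the paper is terser about the final product-structure step, which you spell out correctly.

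Regarding the obstacle you flag: the paper asserts the existence of a real eigenvector $\alpha_1 \in K\otimes\R$ with $G^{\mathcal{V}}_{t_0}\alpha_1 = e^\lambda\alpha_1$, $\lambda<0$, without justification, so your worry applies to the paper's proof as much as to yours. In the intended application (\Cref{thm:main}), however, $K$ is a $2$-dimensional symplectic block, and for a $2\times 2$ symplectic integer matrix whose eigenvalues are $\lambda, \lambda^{-1}$, a genuinely complex eigenvalue would be paired with its conjugate and hence forced to lie on the unit circle, contradicting the hypothesis of a negative Lyapunov exponent; so the eigenvalue is real, and a possibly negative sign is absorbed by doubling the period $N$. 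For higher-dimensional $K$ the concern is genuine, but one can sidestep it by observing that the proofs of \Cref{lem:induced_h}, \Cref{prop:h_formula} and \Cref{lem:gaps} use the eigenvector relation only to control $(A^T)^k\psi$, and for any $\psi$ in the stable subspace these iterates still decay geometrically, which is all the dimension estimate requires. So your caution is warranted but the gap is closable by minor modifications, and is absent in the 2-dimensional case actually used for the wind-tree model.
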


\begin{proof}
By assumption, $G_t^\mathcal{V}$ has some negative Lyapunov exponents. Hence there exists a class $\alpha_1 \in K \otimes \R$ and a $\lambda < 0$, such that $G^\mathcal{V}_{t_0} \alpha_1 = e^\lambda \alpha_1$, where $t_0$ is the length of the closed Teichmüller geodesic containing $(M,\omega)$, i.e. such that $g_{t_0}(M,\omega) = (M,\omega)$. 

Let $T:I\to I$ be a Poincaré section for the vertical flow on $M$ as before. Then for some $N$ that corresponds to a multiple of the time $t_0$, we have $\mathcal{R}^N(T) = T$, and we can assume, by taking a multiple of $t_0$ and $N$ if necessary, that the corresponding matrix $A$ is positive.

Then if $\psi_1$ is the piecewise constant function on $I$ defined by $\psi_1(x) = \langle \alpha_1, \xi(x) \rangle$, $\psi_1$ when seen as a vector is an eigenvector of $A^T$ with eigenvalue $e^\lambda <1$. Let $h_1: I \to \R$ be the transfer function corresponding to $\psi_1$. 
By \Cref{thm:hdim} we know that for any $z\in \R$, $h_1^{-1}(z)$ has Hausdorff dimension strictly less than 1. 

Extend $\{\alpha_1\}$ to a basis $\{\alpha_1,\dots,\alpha_{d_+}\}$ for the stable subspace of $G_t^\mathcal{V}$, and define $h_i$ as in the proof of \Cref{thm:invariant}. In the notation of that theorem, 
\[\tilde h(x,a)= \left(h_1(x) - \sum_{j=1}^m b_{1j}a_j , \dots, h_{d_+}(x) - \sum_{j=1}^m b_{d_+ j}a_j\right)\]
is invariant by $T_\phi$ up to an additive factor of $\Lambda = C \cdot \Z^{d_+}$.

Consider a fixed $a \in \Z^m$ and a fixed $z = (z_1,\dots,z_{d_+}) \in \R^{d_+}$. Let us show that 
\[\dim_H \left((I\times \{a\}) \cap \tilde h^{-1}(z+\Lambda)\right) < 1.\]

Indeed, as each $h_i$ is bounded, $(I\times \{a\}) \cap \tilde h^{-1}(z+\Lambda)$ is the finite union
\[(I\times \{a\}) \cap \tilde h^{-1}(z+\Lambda) = (I\times \{a\}) \cap  \bigcup_{g\in G}\tilde h^{-1}(z+g),\]
for some finite $G \subset \Lambda$.
As the Hausdorff dimension of a finite union of sets is the maximum of the individual dimensions, it is enough to show that for any $z$, 
\[\dim_H \left((I\times \{a\}) \cap \tilde h^{-1}(z)\right) < 1.\]

Now since our $a$ is fixed, $\tilde h(x,a) = z \iff (h_1(x),\dots,h_{d_+}(x)) = (z'_1,\dots,z'_{d_+})$, for $z'_i = z+\sum_{j=1}^m b_{ij}a_j$. 
Hence in particular,
\[(I\times \{a\}) \cap \tilde h^{-1}(z) \subset h_1^{-1}(z'_1) \times \{a\},\]
and by \Cref{thm:hdim}, the latter set has Hausdorff dimension strictly less than 1.

Thus we conclude by \Cref{thm:invariant} that for any $(x,a) \in I \times \Z^m$,
\[\dim_H (\overline{ \{T_\phi^n(x,a): n \in \Z \}} ) < 1,\]
and hence for any $\tilde x \in \tilde M_\gamma$,
\[\dim_H (\overline{ \{\varphi^v_t(\tilde x): t \in \R \}} ) < 2.\]
\end{proof}

\subsection{Application to the wind-tree model}
\begin{proof}[Proof of \Cref{thm:main}]
Recalling \Cref{sec:wind-tree}, we see that for any parameters $a,b$, and any $\theta$ such that $r_{\pi/2-\theta}X(a,b)$ is $g_t$-periodic and at least one of $\gamma_h$ or $\gamma_v$ is unstable, $H_1(X,\Z)$ has an orthogonal splitting satifying the conditions of \Cref{thm:invariant}. Indeed, without loss of generality suppose that $\gamma_h$ is unstable. Then if $K = E^{-+}$ and $K^\perp = E^{++} \oplus E^{+-} \oplus E^{--}$, then this is an orthogonal splitting. By the assumption that $\gamma_h$ is unstable, $K$ has one positive and one negative Lyapunov exponent. Hence applying \Cref{cor:hdim} for the $\Z$-cover of $X(a,b)$ given by $\gamma_h$, we deduce that the closure of any orbit of $(\varphi^\theta_t)_{t\in\R}$ on $\tilde X(a,b)_{\gamma_h}$ has Hausdorff dimension less than 2.
Since $X_\infty(a,b)$ is a $\Z$-cover of $\tilde X(a,b)_{\gamma_h}$, orbit closures of  $(\varphi^\theta_t)_{t\in\R}$ on $X_\infty(a,b)$ are contained within the lifts of orbit closures on $\tilde X(a,b)_{\gamma_h}$, and hence also have Hausdorff dimension less than 2. Since $X_\infty(a,b)$ is the unfolding of the wind-tree model $W(a,b)$, we obtain the desired result.
\end{proof}

\appendix
\section{Plotting invariant sets}\label{sec:plotting}
Here we briefly describe some of the steps of plotting examples of the invariant sets given by \Cref{thm:invariant}, as in \Cref{fig:InvSet}. We do this in the $g_t$-periodic case, where the computation can be done more rigorously. \\

The code used to generate the figure, using the \texttt{surface-dynamics} library in Sage (\cite{surface-dynamics}) is contained in the notebook \texttt{Invariant\_sets\_for\_periodic\_type\_windtrees.ipynb}, which is available on ArXiv, as well as on the website
\begin{center}
\url{https://yuriytumarkin.github.io/windtree/}
\end{center}

The main steps are:
\begin{enumerate}
\setcounter{enumi}{-1}
\item Find parameters $a,b,\theta$ for which $r_{\pi/2-\theta} X(a,b)$ is $g_t$-periodic.
\item Find the stable vectors $\alpha_i$ and compute the transfer functions $h_i$.
\item Find the level sets of $h_i$.
\end{enumerate}

\subsubsection*{0. Finding periodic parameters}

One method to find periodic parameters is to look for loops in the Rauzy graph (see \cite{SinaiUlcigrai}). The Rauzy graph for IETs corresponding to the flow on $X$ is too large for this to be effective, but one can look for loops in the Rauzy graph corresponding to the flow on the genus 2 surface $Y$. If one find parameters $a,b,\theta$ such that $r_{\pi/2-\theta}Y(a,b)$ is $g_t$-periodic, then the fourfold cover $r_{\pi/2-\theta}X(a,b)$ is also $g_t$-periodic, with possibly a longer period.

\subsubsection*{1. Finding stable vectors and computing the transfer function}
In the periodic case, taking the matrix $A = A^{(0,N)}$ corresponding to the period $N$, the stable vectors can be found simply as the eigenvectors of $A^T$ with eigenvalues less than 1, in the relevant blocks $E^{+-}$ and $E^{-+}$. Knowing the stable vectors $\alpha_i$ we can find the corresponding cocycles $\psi_i$, which allows one to compute the transfer functions $h_i$ on the orbit of 0 using the cohomological equation $h(T^k(0)) = h(0) +s_k \psi (0)$. However this isn't enough to determine whether $h_i$ takes a certain value on a given interval or not.

\subsubsection*{2. Finding level sets}
In the periodic case, one can explicitly compute the values of the function $f$ defined in \Cref{sec:h_formula} and then use the expression with geometric tails as derived in \Cref{prop:h_formula} to get precise upper and lower bounds for $h$ on each interval $I_i$. Let $\mathcal{P}^{(1)}$ be the dynamical partition of $I$ into intervals, where each interval is a floor of some tower $Z_j, j\in \mathcal{A}$. Using the self-similarity of $h$, this allows one to compute upper and lower bounds for $h$ on each interval belonging to $\mathcal{P}^{(1)}$, if one can compute the value of $h$ at its left endpoint. To do this, one can use the following observation:

\begin{lemma}\label{lem:tau}
Let $T$ be a periodic type IET, with $\mathcal{R}^N(T) = T$. Let $A = A^{(0,N)}$ be the corresponding Rauzy-Veech matrix. Suppose $\psi$ is an eigenvector of $A^T$ with eigenvalue $\lambda <1$, and let $h$ be the corresponding continuous transfer function. Let $0=x_0 < x_1 < \dots <x_n = 1$ be the discontinuities of $T$. Then if the vector $\tau \in \R^n$ is defined by 
\[\tau_i = h(x_i) - h(x_{i-1}),\]
it satisfies $A\tau = \lambda^{-1} \tau.$
\end{lemma}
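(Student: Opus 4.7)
The plan is to exploit the self-similarity of $T$ under renormalization together with a corresponding scaling relation for the transfer function $h$. I will introduce an auxiliary vector $\tau'\in\R^n$ whose $j$-th component is the jump of $h$ across $J^{(1)}_j$, i.e.\ $\tau'_j = h(r_j) - h(l_j)$ where $l_j, r_j$ denote the left and right endpoints of $J^{(1)}_j$. The proof then reduces to showing two identities, $\tau = A\tau'$ (Step 1) and $\tau' = \lambda\tau$ (Step 2); combining them yields $\tau = \lambda A\tau$, equivalently $A\tau = \lambda^{-1}\tau$.

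For Step 1, recall that each $I_i = [x_{i-1}, x_i)$ is a disjoint union of floors of the towers $Z_j$, with exactly $A_{ij}$ floors coming from $Z_j$ by definition of the Rauzy--Veech cocycle. Any such floor has the form $T^l(J^{(1)}_j)$ for some $0\le l<q_j$, on which $T^l$ is an orientation-preserving isometry. Iterating the cohomological equation $h\circ T - h = \psi$ shows that $h$ on $T^l(J^{(1)}_j)$ is obtained from $h|_{J^{(1)}_j}$ by precomposition with $T^{-l}$ plus the additive constant $f((j,l)) = S^T_l\psi|_{I_j}$; in particular the jump of $h$ across $T^l(J^{(1)}_j)$ equals $\tau'_j$. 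Since $h$ is continuous, the jumps across consecutive floors within $I_i$ telescope, giving $\tau_i = \sum_j A_{ij}\tau'_j$.

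For Step 2, set $\rho = |J^{(1)}|$ and define $\tilde h(y) := h(\rho y)$ for $y\in[0,1)$. The hypothesis $\mathcal{R}^N(T) = T$ means that after rescaling by $1/\rho$ the induced map $T_1 = \hat{\mathcal{R}}^N(T)$ coincides with $T$ as a labelled IET; in particular $J^{(1)}_j = \rho I_j$ and $T_1(\rho y) = \rho T(y)$. Hence for $y \in I_j$, \Cref{lem:induced_h} gives
\[\tilde h(Ty) - \tilde h(y) = h(T_1(\rho y)) - h(\rho y) = \lambda\psi_j,\]
so $\tilde h$ is a continuous transfer function for $\lambda\psi$ with respect to $T$. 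Since $\lambda h$ is also such a function and $T$ is minimal (the matrix $A$ being positive implies Keane's condition), continuous transfer functions differ by a constant; as both $\tilde h$ and $\lambda h$ vanish at $0$ by normalization, $\tilde h = \lambda h$. Thus $\tau'_j = \tilde h(x_j) - \tilde h(x_{j-1}) = \lambda\tau_j$, completing the proof.

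The main technical delicacy lies in Step 2, specifically in justifying that the labelled intervals $J^{(1)}_j$ correspond precisely to $I_j$ under rescaling by $1/\rho$. This follows from the label-preservation property of Rauzy--Veech induction together with the periodicity hypothesis $\mathcal{R}^N(T) = T$, but it is the only place where we use self-similarity in an essential, non-formal way.
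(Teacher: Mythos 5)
Your proof is correct and arrives at the same two-step decomposition $\tau = A\tau'$ and $\tau' = \lambda\tau$ as the paper, but the second step is argued by a genuinely different route. For the jump vector $\tau'$ at the discontinuities of $T_1$, the paper establishes the pointwise relation $h(x^{(1)}_i) = \lambda h(x_i)$ by choosing a sequence $T^{k_r}(0) \to x_i$, noting that under the rescaling conjugacy $T_1^{k_r}(0) \to x^{(1)}_i$, and applying \Cref{lem:induced_h} iteratively along the orbit of $0$ before passing to the limit by continuity. You instead observe that the rescaled function $\tilde h(y) := h(\rho y)$ is itself a continuous transfer function for $\lambda\psi$ over $T$, and invoke uniqueness of continuous transfer functions over a minimal IET up to an additive constant, together with the normalization $h(0)=0$, to obtain the global scaling identity $\tilde h = \lambda h$. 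Your argument is more structural and conceptually cleaner in that it extracts the full self-similarity $h(\rho y) = \lambda h(y)$ rather than only its values at the $x_i$; the paper's argument is more elementary, avoiding the uniqueness lemma for coboundary transfer functions (though both approaches ultimately rest on minimality). Your Step 1 — telescoping the jumps of $h$ across consecutive tower floors inside each $I_i$ via continuity — matches the paper's second paragraph. One small imprecision: you justify minimality by saying ``the matrix $A$ being positive implies Keane's condition,'' but the implication runs the other way. What actually gives Keane's condition here is that $\mathcal{R}^N(T)=T$ forces Rauzy--Veech induction to be defined for all time; Keane's condition then yields minimality, and positivity of some power of $A$ is a consequence, not the cause. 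The conclusion that $T$ is minimal is nonetheless correct.
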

\begin{proof}
If the discontinuities of the induced map $T_1$ are $0 = x_0^{(1)} < x_1^{(1)} < \dots < x_n^{(1)}$, then by self-similarity the vector $\tau^{(1)}$ given by $\tau^{(1)}_i = h(x^{(1)}_{i}) - h(x^{(1)}_{i-1})$ is simply $\tau^{(1)} = \lambda \tau$. Indeed, if $(k_r)_{r\in \N}$ is an increasing sequence of integers such that $T^{k_r}(0) \to x_i$, then $T_1^{k_r}(0) \to x^{(1)}_i$, and applying \Cref{lem:induced_h} we see that $h(x^{(1)}_i) = \lambda h(x_i)$.

Since for each $j$, the interval $I_i$ consists of $A_{ij}$ floors of the tower $Z_j$, and the difference between the value of $h$ at the left and right endpoints of each floor is $\tau^{(1)}_j$, it follows that $\tau_i = \sum_{j\in \mathcal{A}} A_{ij} \tau^{(1)}_j$, and thus $\tau = A \tau^{(1)} = \lambda A\tau.$
\end{proof}

This may not be enough by itself to identify $\tau$, since the $\lambda^{-1}$-eigenspace of $A$ may have higher dimension, but one can also use additional considerations such as the values of $h$ at $T^{-1}(0)$ and $T^{-1}(1)$. Thus for example, if $\beta  = \pi_b^{-1}(1)$, so that $T^{-1}(0)$ is the left endpoint of $I_\beta$, then
\[-\psi_\beta = h(T^{-1}(0)) = \sum_{\alpha \in \mathcal{A}: \pi_t(\alpha) < \pi_t(\beta)} \tau_\beta.\]
Combining this with \Cref{lem:tau} one can identify the vectors $\tau$. Knowing also the substitution corresponding to $A$, for any interval $K \in \mathcal{P}^{(1)}$, one can locate $K$ inside $I$, compute the value of $h$ at the left endpoint of $K$, and then use the upper and lower bounds to check whether $h$ takes a certain value on $K$ or not.

\bibliography{nonergodicity}{}
\bibliographystyle{amsalpha}

\end{document}